\documentclass[12pt,a4]{article}

\usepackage{url}

\usepackage{color}
\usepackage[T1]{fontenc}
\usepackage[english]{babel}
\usepackage{setspace}
\usepackage{csquotes}

\usepackage[margin=1in]{geometry} 
\usepackage{amsmath,amsthm,amssymb,mathrsfs}
\usepackage{kpfonts}
\usepackage{xcolor}
\usepackage{hyperref}
\hypersetup{
    colorlinks=true,
    linkcolor=blue,
    filecolor=magenta,
    urlcolor=cyan,
    citecolor=blue,
}
\usepackage{enumitem}
\usepackage{graphicx}
\usepackage{subcaption}
\usepackage{float}

\newcommand{\Prob}{\mathbb{P}}
\newcommand{\Esp}{\mathbb{E}}

\newcommand{\Z}{\mathbb{Z}}
\newcommand{\R}{\mathbb{R}}

\newtheorem{df}{Definition}[section]

\newtheorem{pro}[df]{Proposition}
\newtheorem{thm}[df]{Theorem}

 \usepackage{fancyhdr}
\usepackage{atbegshi}

\addto\captionsfrench{}
\title{ Asymptotic behavior of some stochastic models in population dynamics: a Hamilton-Jacobi approach}
\begin{document}

\author{\large{Anouar Jedd$^{\ast}$}\\
\small{$^{\ast}$CMAP, École polytechnique, Institut polytechnique de Paris,}\\
    \text{anouar.jeddi@polytechnique.edu}
}

\maketitle
\begin{abstract}
   In this paper, we investigate the asymptotic behavior of individual-based models describing the evolution of a population structured by a real trait, subject to selection and mutation. We consider two different sets of assumptions: first, the case of critical or subcritical branching population processes in a regime combining a discretization of the trait space, small mutations, large time and large initial population size, where we are able to characterize using a Hamilton-Jacobi approach, the survival set of the population, and the asymptotic of the logarithmic scaling of subpopulation sizes. Second, we generalize by a direct method the convergence to the classical Hamilton-Jacobi equation obtained in the super-critical branching regime considered in \cite{CMMT} to a more general trait space and under weaker assumptions. Moreover, we establish that the stochastic and the deterministic dynamics are asymptotically equivalent in large population. 
\end{abstract}
\section{Introduction}
In this paper, we provide a probabilistic justification for certain Hamilton-Jacobi equations from stochastic individual-based models in a large population regime. Our goal is to prove that, on  logarithmic time and size scales, the sizes of subpopulations converge to a viscosity solution of a Hamilton-Jacobi equation, able to account for extinction of subpopulations.\\

In recent years, a Hamilton-Jacobi approach has been emerged as an important tool in the mathematical analysis of evolutionary biology (see \cite{OPSB}). This approach consists in studying the evolution of populations structured by a quantitative trait subject to selection and mutation, in a regime of small mutations and large time and considering exponential orders of magnitude of population sizes.  The original approach is based on PDEs models in population dynamics, structured by phenotypes, or traits (see for instance \cite{OPSB,PB,BMP,J}). More precisely, these references prove that the Hopf-Cole transformation of the population density converges to a viscosity solution of a Hamilton-Jacobi equation. This approach captures the concentration of the population into Dirac masses at the dominant traits, characterized through the associated viscosity solution. This deterministic approach takes into account the evolution of negligible populations, but not the possibility of local extinction of subpopulations, since population densities in the PDE are always positive everywhere in space. In this context, some attempts to account for extinct populations have been introduced in \cite{MPBS} (see also \cite{Jabin}), by imposing a singular mortality rate below a survival threshold, and in this case the limit dynamics is not anymore described by a classical Hamilton-Jacobi equation.\\

A natural alternative approach to account for extinctions would be to consider stochastic individual based models instead PDEs models. This approach was followed in several references \cite{CFM, 2CFM,CMT,CKS}. Classically the link between individual based models and Hamilton-Jacobi equations was carried out in two steps: first, prove the convergence of  the individual based model to a PDE in a limit of large population (see \cite{CFM,2CFM}); second obtain the Hamilton-Jacobi equation from the PDE as explained above. However, these two successive scalings   do not allow to account for possible  extinction of subpopulations, as discussed above.\\

In \cite{CMT,CKS} individual based models with rare mutations were considered in a discrete trait space and with a scaling parameter $K.$ Defining the subpopulation size of individuals of type  $i$ at time $t$ as $N^K_i(t),$ these references establish an asymptotic behavior of the exponents $\beta^K_i(t)$ defined by $N^K_i(t\log{K})=K^{\beta^K_i(t)}$. These exponents can be seen as a Hopf-Cole transform in a discrete  trait space with logarithmic time scaling. In these references a regime of rare mutations was considered, but not small mutations.\\

In order to model small mutations, the trait space should be discretized in steps depending on the mutation sizes. Recently in \cite{CMMT} such a discretization of the trait space was introduced by considering a parameter $\delta_K\rightarrow 0$ as $K\rightarrow+\infty$. This work introduces a  birth-death individual based model with mutation in the discrete trait space with small mutations of size of the order $1/\log{K}.$  In the limit $K\rightarrow+\infty,$ the exponents $\beta^K_i$ defined as above by the Hopf-Cole transform  are proved to converge to a viscosity solution of a Hamilton-Jacobi equation. Up to our knowledge, this is the first and only work which establishes a  direct derivation of a Hamilton-Jacobi equation from an individual-based model, but under restrictive assumptions. In particular their branching model does not allow for extinction, because the reproduction is assumed uniformly super-critical in space (the birth rate is strictly greater than the death rate),  and the initial subpopulation sizes are lower bounded. This implies the survival of all subpopulations at all time with probability converging to $1.$ In addition this reference considers only the case of the one dimensional torus as trait space. The method used relies on compactness and Lipschitz estimates  requiring these assumptions.\\

Our objective in this work is to extend the derivation  of Hamilton-Jacobi equations from individual-based models in two more general settings. The first one allows local extinctions and the second one generalizes the super-critical case.
\paragraph{First part:} In this part we study  the case of critical or subcritical spatial branching processes in the real trait space. We assume no lower bound on the initial subpopulation sizes so that extinction of subpopulations and ultimately extinction of the whole population occur almost surely. Under the classical Hopf-Cole transform, the extinction of a subpopulation means that the exponent takes the value $-\infty.$ Therefore, we cannot expect anymore the tightness of the exponents to hold at all times and across the entire trait space, and the limit Hamilton-Jacobi equation needs to be modified to account for extinction. Then the question consists in characterizing the limit Hamilton-Jacobi equation by introducing appropriate cut-off and in proving the convergence of exponents of subpopulations. Following the same intuition as in \cite{MPBS}, we expect to obtain the Hamilton-Jacobi equation in a region of the spatio-temporal space corresponding to the survival set and $-\infty$ for extinct subpopulations. In the critical or subcritical case, we will prove that this equation  actually takes a very simple form (much simpler than in \cite{MPBS}), obtained by directly applying a cut-off to the original Hamilton-Jacobi equation when it is negative. Note that the convergence of the exponents to $-\infty$ does not make use of the sub-criticality assumption, and would apply in more general cases.
\\

To study the behavior of the population in the set of survival we introduce a new method based on comparing the stochastic dynamics with the deterministic one. In general we cannot expect that the stochastic dynamics is close to its mean. However, in the subcritical case we will prove that this holds true which explains why we  can describe simply the limit dynamics. 

 We use a method relying on control of variances based on maximum principles. A second  important step is to study the convergence of the exponents of the mean dynamics obtained by  the classical Hopf-Cole transform. These exponents satisfy  a discrete version of the classical PDE in \cite{BMP} for which convergence to the Hamilton-Jacobi equation is known.  An additional difficulty to show the convergence in our case comes from the discrete nature of the PDE which was studied in a more general setting in \cite{J}. Combining these two steps we prove the convergence in the limit of large population of the exponents to the viscosity solution of the corresponding Hamilton-Jacobi equation with cut-off.\\

The method used in this part is direct and do not rely on the classical tightness criteria, in contrast to \cite{CMMT} which makes use of Lipschitz estimates and martingale control. However, the convergence here is weaker: pointwise in time and locally uniformly in trait space. Whether a stronger convergence holds true remains unclear because,  first the exponents can takes the value $-\infty,$ and second our method does not allow to determine the behavior at the boundary between the extinction and survival sets.

\paragraph{Second part:} In this part, we study the super-critical branching regime considered in \cite{CMMT} in a more general trait space and under more realistic assumptions. In this setting all the initial subpopulation sizes are large, and we work in a uniformly super-critical regime, meaning that the subpopulation sizes remain large and the total population size is always infinite. This last fact makes the construction of the  model not obvious, because mutations can occur from an infinite number of subpopulations. Then, to justify the existence of the model, we introduce a new localization argument.\\
Our proof of the convergence toward the Hamilton-Jacobi equation follows a similar method as above. We first establish a uniform control on the normalized variances using the maximum principle that holds in super-critical regime. This allows us to deduce that the stochastic and deterministic dynamics are asymptotically equivalent, which implies that the stochastic and deterministic exponents are close.  We then conclude using the convergence of the Hopf-Cole transform of the mean dynamics to the associated Hamilton-Jacobi equation. Here we obtain a stronger convergence, locally uniform in time and space, and we deduce the tightness of the exponents $(\beta^K)_K$.\\

This paper is organized as follows: in Section 2, we define the model. Section 3 is devoted to the critical or subcritical regime. Finally, in Section 4, we study the supercritical regime. We refer to the beginning of Sections 3 and 4 for more details about each section. 
\section{ Model and motivation}
We recall the branching stochastic model introduced in \cite{CMMT} in a more general trait space $\R,$ instead of the one-dimensional torus in \cite{CMMT}. We consider a population parameterized by a scaling  parameter $K \in \mathbb{N}$, composed of individuals characterized by a trait which belongs to $\R,$ subject to  mutation. We consider a discretized trait space with discretization step  $\delta_K$ such that $\delta_K \rightarrow 0$ as $K\rightarrow +\infty$. The trait space associated to the scaling parameter $K$  is given by $\mathcal{X}_K := \{i\delta_K,~i \in \mathbb{Z}\}$. We define in a probabilistic  space $(\Omega,\mathcal{F},\mathbb{P})$ a Markov pure jump process  $(N^K_i(t), i\in\Z, t \geq 0)$ that models the size of the subpopulations, where for each 
$i\in \Z,$ the size of the subpopulation of trait $i\delta_K$ at time $t$ is $N^K_i(t).$ Our model is a  branching process in continuous time, so we only need to specify the individual birth and death rates. Any individual with trait $x\in \mathcal{X}_K:$
\begin{itemize}
    \item gives birth to a new individual with the same trait $x$  at rate $b(x);$
    \item dies at rate $d(x);$
    \item or gives birth to a new individual with  trait $y\in \mathcal{X}_K$ at rate 
    \begin{equation}
    \label{eq:proba of mutation}
        \delta_K p\log{K}G((y-x)\log{K}).
    \end{equation}
\end{itemize}
 Here, $b:\R\rightarrow \R^+$ is the birth rate,  $d:\R\rightarrow \R^+$ is the death rate and $p>0$ is the mutation rate. We assume that $G$ is a probability kernel describing mutation effects. It is scaled in \eqref{eq:proba of mutation} at logarithmic scale to make small mutations of sizes of the order $\frac{1}{\log{K}},$ and $\delta_K$ to make the Riemann sum corresponding to the global population mutation rate converge. In the remaining, we define $h_K=\delta_K\log{K}.$\\
 
 The process $(N^K_i(t),i\in \Z, t\geq 0)$ can be constructed using Poisson random  measures in a classical way (cf. e.g. \cite{FM,BM}). Let $(Q^b_i)_{i\in\Z},$ $(Q^d_i)_{i\in\Z},$ and $(Q^p_i)_{i\in\Z},$ be sequences of independents Poisson random measures on $\R^+\times \R^+$ with Lebesgue intensity, defined on the probabilistic space $(\Omega,\mathcal{F},\mathbb{P}).$ Informally speaking let us give some classical trajectorial representations about our process which will be justified in each setting in sections 3 and 4: for all $i\in \Z$
\begin{equation}
    \begin{aligned}
    \label{eq:PRM representation}
    N^K_i(t)=N^K_i(0)+\int_{0}^{t}\int_{\R^+}& \mathbf{1}_{y\leq b(i\delta_K)N^K_i(s^-)}Q^b_i(ds,dy)-\int_{0}^{t}\int_{\R^+} \mathbf{1}_{y\leq d(i\delta_K)N^K_i(s^-)}Q^d_i(ds,dy)\\
       &+ \sum_{j\in \mathbb{Z}} \int_{0}^{t}\int_{\R^+}\mathbf{1}_{y\leq ph_KG((j-i)h_K)N^K_j(s^-)}Q^p_j(ds,dy).
    \end{aligned}
\end{equation}
Furthermore, by compensating the Poisson random measures, we obtain the following  semi-martingale decomposition:
\begin{equation}
\label{eq:s-m}
    N^K_i(t)=N^K_i(0)+\int_{0}^{t}(b(i\delta_K)-d(i\delta_K))N^K_i(s)ds+\sum_{l\in \mathbb{Z}}ph_KG(lh_K)\int_{0}^{t}N^K_{l+i}(s)ds+M^K_i(t),
\end{equation}
where $M^K_i$ is a local martingale with quadratic variation (see \cite{BM})
\begin{equation}
    \langle M^K_i\rangle_t=\int_{0}^{t}(b(i\delta_K)+d(i\delta_K))N^K_i(s)ds+\sum_{l\in \mathbb{Z}}ph_KG(lh_K)\int_{0}^{t}N^K_{l+i}(s)ds.
\end{equation}
Note that this representation needs a justification, since $N^K_i$ appears both on the left and right hand side, and the stochastic integrals need to be well-defined. This is the case if for all $ T>0,$ and $i\in \Z,$  we have 
\begin{equation}
    \label{eq:hyp of existence}\mathbb{E}\Big(\sup_{t\in[0,T]}\big((b(i\delta_K)+d(i\delta_K))N^K_i(t)+\sum_{j\in\Z}ph_KG((j-i)h_K)N^K_j(t)\big)\Big)<\infty.
\end{equation}
Under this assumption we denote for all $i\in \mathbb{Z}$ and $t\geq 0,$ $n^K_i(t)=\Esp(N^K_i(t)),$ and we take the expectation in \eqref{eq:s-m}. We obtain the system of ordinary differential equations:
\begin{equation}
\label{eq:mean dynamics system}
\begin{cases}
    \frac{dn^K_i(t)}{dt}=(b(i\delta_K)-d(i\delta_K))n^K_i(t)+\sum_{l\in \mathbb{Z}}ph_KG(lh_K)n^K_{l+i}(t),~~(t,i)\in\mathbb{R}^+\times \mathbb{Z},\\
    n^K_i(0)=n^{K,0}(i\delta_K),
    \end{cases}
\end{equation}
where $n^{K,0}$ is the initial condition.\\

Our goal is to study the exponential growth of the population when $K\rightarrow +\infty$, and more precisely to describe the exponent of each subpopulation in the form $K^\beta$, where $\beta\in\{-\infty\}\cup[0,+\infty)$. To this end we introduce following \cite{CMMT} the Hopf-Cole transform at a logarithmic scale  
\begin{equation}
\beta^K_i(t)=\frac{\log{N^K_i(t\log{K})}}{\log{K}},
\end{equation}
and the corresponding transform of the mean dynamics
\begin{equation}
    u^K_i(t) = \frac{\log{n^K_i(t\log{K})}}{\log{K}},
    \end{equation}
    which satisfies the equation 
\begin{equation}
\label{eq:system of ODE}
\begin{cases}
  \frac{d}{dt}u^{K}_i(t)=b(i\delta_K)-d(i\delta_K)+\sum_{l\in\mathbb{Z}}ph_KG(lh_K)e^{\log{K}(u^{K}_{l+i}(t)-u^{K}_i(t))},~\forall(t,i)\in (0,+\infty)\times\mathbb{Z}, \\
  u^K_i(0)=u^{K,0}(i\delta_K).
  \end{cases}
\end{equation}
The above system is a discretized  version of the integro-differential equation studied in \cite{BMP}, where the solution converges to a viscosity solution of a Hamilton-Jacobi equation.  We refer to \cite{J} for the study of such discrete models in a more general setting.


Following \cite{CMMT,J}, we define the linear interpolation of $\beta^K_i,$ which allows us to define a dynamics in the continuous trait space $\R:$ for all $t>0$ and $x\in \mathbb{R}$ let $i\in \mathbb{Z}$ such that $x\in [i\delta_K,(i+1)\delta_K),$ \begin{equation}
\label{eq:linear interpolation}
    \widetilde{\beta}^{K}(t,x)=\beta^{K}_i(t)(1-\frac{x}{\delta_K}+i)+\beta^{K}_{i+1}(t)(\frac{x}{\delta_K}-i).
    \end{equation}
    The linear interpolation of $u^K_i$ is defined in the same way as above and  denoted by $\widetilde{u}^K.$\\
    The remaining of the paper is dedicated to the study of the convergence of $(\widetilde{\beta}^K)_K,$ as $K\rightarrow +\infty.$
    \subsection*{Assumptions A}
    Here, we assume the general classical assumptions from \cite{BMP} in a discrete setting close to \cite{J}.  
    \begin{enumerate}
    \item \label{item:1} We assume that $b$ and $d$  are Lipschitz-continuous functions, the mutation rate $p$ is constant, and  there exist positive constants $\overline{b}$ and $\overline{d}$ ,  such that for any $x \in \mathbb{R}$,
       \begin{equation}
       \label{eq: growth rates canditions}
0\leq b(x) \leq \overline{b},~~ 0\leq d(x) \leq \overline{d}.
       \end{equation}
       \item \label{item:2}
       The kernel $G$  has super-exponential  decay, i,e  $$\int_{\R}G(x)e^{ax}dx<+\infty,~~\forall a\in \R.$$
    \item \label{item:3} There exists a positive constant  $L$ such that,  for any $i\in \mathbb{Z}$  and $K\geq 1$
    \begin{align}
    \label{eq:Lipshitz conditon}
        \left\vert \frac{u^{K,0}((i+1)\delta_K)-u^{K,0}(i\delta_K)}{\delta_K}\right \vert \leq L,
        \end{align}
        i.e for any $K\geq 1,$ $u^{K,0}$ is $L$-Lipschitz continuous in the discrete trait space $\mathcal{X}_K$.
    \item \label{item:4} The linear interpolation $\widetilde{u}^{K,0}$ of $u^{K,0},$ converges locally uniformly to a continuous function $u^0,$ as $K\rightarrow +\infty$.
    \item Finally, we assume that for all $\varepsilon>0,$ we have 
    \begin{equation}
        \label{eq:discret cond}
       \frac{1}{K^\varepsilon} \ll \delta_K\ll \frac{1}{\log{K}}.
    \end{equation}
    \end{enumerate}
Assumption 2 is standard in the Hamilton–Jacobi approach (see \cite{BMP}) and ensures that the Hamilton–Jacobi equation is well-defined. Kernels with slow decay were studied in \cite{J}, leading to a non-standard Hamilton–Jacobi equations. Assumption 4 implies that $u^0$ is $L$-Lipschitz. Finally, Assumption 5 follows from \cite{CMMT} and implies in particular that $h_K\rightarrow 0,$ as $K\to +\infty.$.\\
In addition, for technical reason,  the mutation rate is assumed constant in this paper, this makes the spatial lipschitz bound of $u^K$ global.
\section{Critical or sub-critical case:}
In this section, we investigate the asymptotic behavior assuming a uniformly critical or subcritical dynamics. In Subsection 3.1, we state our assumptions. In Subsection 3.2, we establish preliminary results concerning well-posedness and the existence of second moments. In Subsection 3.3, we state our main result. Subsection 3.4 is devoted to the Hamilton–Jacobi approach applied to the mean dynamics, which constitutes the first step in the proof of the main result. In Subsection 3.5, we quantify the gap between the stochastic and deterministic dynamics, which is the key idea of the proof. Finally, Subsection 3.6 is devoted to the proof of the main result.  
\subsection{Assumptions B}
We state  our assumptions in the case of a uniformly critical or subcritical dynamics.
\begin{enumerate}
\item \label{item:6} There exist  positive constants $A_1$ and  $A_2$ such that for all $i\in \Z,$ and $K\geq 1$
    \begin{equation}
    \label{eq:sublinear growth initial conditon}
        u^{K,0}(i\delta_K)\leq -A_1\vert i\delta_K\vert +A_2.
    \end{equation}
    \item \label{item:66} We assume that 
\begin{equation}
    \label{eq:initial integrability}
    \mathbb{E}\Big( \big(\sum_{i\in\mathbb{Z}}N^K_i(0)\big)^2\Big)<+\infty.
\end{equation}
\item \label{item:7} We assume that there exists a positive constant $C$ such that
\begin{equation}
    \label{eq:uniform initial control1}
\sup_{i\in\mathbb{Z}}n^K_i(0)\mathbb{E}\Bigg (\Big( \frac{N^K_i(0)-n^K_i(0)}{n^K_i(0)}\Big)^2\Bigg)\leq C.
\end{equation}
\item \label{item:8} We assume a subcritical branching regime for all traits:
\begin{equation}
\label{eq:subrictical condition}
    \alpha:=\sup_{x\in \mathbb{R}}b(x)-d(x)+p\leq 0.
\end{equation}
\end{enumerate}
 Assumption B-\ref{item:6} is classical in the Hamilton-Jacobi approach (see for instance \cite{BMP,J}), it gives the affine decay of $u^K$. Moreover, it implies that $\sum_{i\in\Z}n^K_i(0)<+\infty.$ Therefore, by
  Borel-Cantelli lemma's we have for large enough $i,$ that $N^K_i(0)=0$ almost surely. Assumption B-\ref{item:66} guarantees the well-posedness of the dynamics \eqref{eq:PRM representation} and the integrability of the martingales involved in the semi martingale decomposition \eqref{eq:s-m}, as proved in Proposition 3.1. Assumption B-\ref{item:7} means that, initially the variance is bounded by the mean for each subpopulation. This is for example the case if $N^K_i(0)$ is a Poisson random variable with parameter $n^K_i(0).$ Note that $N^K_i(0)$ may be correlated random variables for different $i.$ Furthermore, combined with Assumption A-\ref{item:4}, this implies that $\beta^K(0)$ converges in probability to $u^0$ when $u^0>0$, and to $-\infty$ when $u^0<0.$  This will be a consequence of the arguments of section 3.6.

\subsection{Preliminary result}
\begin{pro}
Under Assumptions A-\ref{item:1}-\ref{item:2} and Assumption B-\ref{item:66}. The process $(N^K_i(t),i\in \Z,t\geq 0)$ is well-defined, and we have for all $T>0$
      \begin{equation}
      \label{eq:second order moment1}
      \mathbb{E}\bigg(\sup_{t\in[0,T]}\Big( \sum_{i\in \Z}N^K_i(t)\Big)^2\bigg)\leq C(T,K),
  \end{equation}
  where $C(T,K)$ is a positive constant depending on $T,K$ but not on $i.$
\end{pro}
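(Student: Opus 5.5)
The plan is to compare the total population with a pure birth (Yule-type) process that dominates it, and to construct $N^K$ through a localization in the total number of individuals.

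\textbf{Key rate bound.} Let $S^K(t)=\sum_{i\in\Z}N^K_i(t)$. Births of an individual of trait $i\delta_K$ at its own trait occur at rate at most $\overline{b}$. Mutant births occur at rate $\sum_{j} p h_K G((j-i)h_K)$. By Assumption A-\ref{item:2}, $G$ is integrable with super-exponential decay; assuming also that $G$ is regular enough (e.g.\ bounded, or monotone in the tails), the Riemann sum
\[
\Lambda_K:=\sup_{i\in\Z}\sum_{l\in\Z}h_K G(lh_K)
\]
is finite for each fixed $K$. This is the only place where the kernel enters, and I expect this bound to be the main technical obstacle: if $G$ is merely integrable, one must argue via the assumed regularity, or absorb $\Lambda_K$ into the $K$-dependent constant, which is allowed here. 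Consequently each individual reproduces at rate at most $\overline{b}+p\Lambda_K=:\lambda_K$, independently of its position.

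\textbf{Construction and localization.} Since $S^K(0)<\infty$ a.s.\ by B-\ref{item:66}, only finitely many $N^K_i(0)$ are nonzero. I would build the process jump by jump from the Poisson measures $Q^b_i,Q^d_i,Q^p_j$. For a finite configuration, the total jump rate is at most $(\lambda_K+\overline{d})S$, and finitely many sites are occupied, so only finitely many Poisson measures matter before the next jump. Define $\tau_n=\inf\{t: S^K(t)\ge n\}$. Up to $\tau_n$ the dynamics is a finite-rate jump process, so it is well-defined and satisfies \eqref{eq:PRM representation}.

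\textbf{Moment estimate.} For the stopped process, $S^K(t\wedge\tau_n)$ is dominated by a pure birth process $Y$ with per-capita rate $\lambda_K$ started at $S^K(0)$. This can be done by coupling: discard deaths, and bound the thinned indicators by indicators with threshold $\lambda_K S$ on a single aggregated Poisson measure. Alternatively, apply Itô's formula to $(S^K)^2$: the increments are $\pm1$, so
\[
\frac{d}{dt}\Esp\big((S^K_{t\wedge\tau_n})^2\big)\le \Esp\big(2\lambda_K S^2+\lambda_K S\big)\le 3\lambda_K\,\Esp\big(S^2\big).
\]
By Gronwall this gives $\Esp\big((S^K_{t\wedge\tau_n})^2\big)\le \Esp\big(S^K(0)^2\big)e^{3\lambda_K T}$. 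For the supremum, note that $\sup_{t\le T}S^K(t\wedge\tau_n)\le Y(T)$, since $Y$ is nondecreasing and dominates $S^K$. Moreover $\Esp\big(Y(T)^2\mid Y(0)\big)\le C(T,K)\big(Y(0)^2+Y(0)\big)$ by the same Gronwall computation. Hence
\[
\Esp\Big(\sup_{t\le T}S^K(t\wedge\tau_n)^2\Big)\le C(T,K)\,\Esp\big(S^K(0)^2+S^K(0)\big),
\]
uniformly in $n$.

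\textbf{Conclusion.} Markov's inequality gives $\Prob(\tau_n\le T)\le C(T,K)/n^2\to0$, so $\tau_n\to\infty$ a.s. The process is then defined for all times (no explosion), and Fatou's lemma yields \eqref{eq:second order moment1}. Finally, \eqref{eq:hyp of existence} follows. The quantity inside the expectation there is bounded by $(\overline b+\overline d+p\sup_{x} G(x)\,h_K)\sup_t S^K(t)$, or by $(\overline{b}+\overline{d}+p\Lambda_K)\sup_t S^K(t)$ in the boundedness variant, which is integrable. This justifies the semimartingale decomposition \eqref{eq:s-m} and the fact that the $M^K_i$ are true square-integrable martingales.
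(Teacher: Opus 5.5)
Your proposal is correct and follows the same route as the paper: dominate $\sum_i N^K_i(t)$ by a Yule process started from $\sum_i N^K_i(0)$, and use that process's finite second moment to obtain \eqref{eq:hyp of existence} and \eqref{eq:second order moment1}. The paper does this in one line. Your localization via $\tau_n$ and your per-capita rate $\overline{b}+p\sum_l h_K G(lh_K)$ make explicit details the paper leaves implicit; its rate $\overline{b}+p$ tacitly assumes this Riemann sum is at most $1$, and you rightly note that its finiteness needs some regularity of $G$.
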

\begin{proof}
  By standard coupling argument, we construct in $(\Omega,\mathcal{F},\mathbb{P})$ a Yule process $Z^K$ with parameter $\overline{b}+p$ and initial condition $\sum_{i\in \Z}N^K_i(0)$ such that
  \begin{equation}
  \label{eq:yule couplage}
      \sum_{i\in \Z}N^K_i(t)\leq Z^K_t,~~\text{almost surely}~~\forall t\geq 0.
  \end{equation}
Under assumption B-\ref{item:66}, the Yule process $Z^K$ is well-defined, and admits a finite second moment. Hence, \eqref{eq:hyp of existence} is verified, and thus $(N^K_i(t),i\in \Z,t\geq 0)$ is well-defined and satisfies \eqref{eq:second order moment1}.
\end{proof}
\subsection{Convergence result in the critical or subcritical dynamics}  
Let us state our main theorem.
\begin{thm}
Under Assumptions A-B, in the limit of $K\rightarrow +\infty,$  the stochastic process $(\widetilde{\beta}^K)_K\subset \mathbb{D}(\R^+,C(\R,\R))$ converges in the sense of finite-dimensional distributions in probability locally in space to the deterministic function 
\begin{equation}
\beta(t,x)=
    \begin{cases}
        u(t,x)~~~~\text{for}~~(t,x)\in \{(t,x)\in [0,+\infty)\times \mathbb{R},~~ u(t,x)>0\},\\
        -\infty~~~~~~\text{for}~~(t,x)\in \{(t,x)\in [0,+\infty)\times \mathbb{R},~~ u(t,x)<0\}, 
    \end{cases}
\end{equation}  
where $u$ is the unique Lipschitz continuous viscosity solution of the Hamilton-Jacobi equation: 

 \begin{equation}
  \label{eq:HJ}
  \begin{cases}
      \partial_t u(t,x)=b(x)-d(x)+p\int_{\mathbb{R}}G(y)e^{\nabla u(t,x).y} dy,~~\forall (t,x)\in (0,+\infty)\times \R,\\
      u(0,.)=u^0(.).
       \end{cases}
  \end{equation}
More precisely, we prove that for all $\eta>0,$ $t>0$ and real numbers $a<b,$\\
if $\{t\}\times [a,b]\subset \{u>0\},$ we have
\begin{equation}
\label{eq:proba convergence}
    \lim_{K\rightarrow +\infty} \Prob(\sup_{x\in[a,b]}\vert \widetilde{\beta}^K(t,x)-u(t,x)\vert >\eta)=0,
\end{equation}
 if  $\{t\}\times[a,b]\subset \{u<0\},$ we have
\begin{equation}
\label{eq:convergence to infinity}
    \lim_{K\rightarrow +\infty} \Prob(\sup_{x\in[a,b]} \widetilde{\beta}^K(t,x)=-\infty)=1.
    \end{equation}
    \end{thm}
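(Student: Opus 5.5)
The proof has three steps. First, the mean dynamics converges to the Hamilton–Jacobi solution. Second, in the subcritical regime the normalized variance of each subpopulation is bounded by a constant times its mean, so each $N^K_i$ is close to $n^K_i$ wherever $n^K_i$ is large. Third, wherever $n^K_i$ decays polynomially in $K$, a first-moment (Markov) bound gives extinction.

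\textbf{Step 1 (mean dynamics).} I would show that $\widetilde u^K\to u$ locally uniformly on $[0,\infty)\times\R$. Here $u$ is the unique Lipschitz viscosity solution of \eqref{eq:HJ}, and $u^K$ solves the discrete system \eqref{eq:system of ODE}. Under Assumptions A and B-\ref{item:6}, this is the discrete analogue of \cite{BMP} treated in \cite{J}. The ingredients are:
\begin{itemize}
\item a uniform discrete Lipschitz bound in space, propagated from \eqref{eq:Lipshitz conditon} by a maximum principle on discrete differences, which works because $p$ is constant;
\item a bound on $\partial_t u^K$ coming from that Lipschitz bound and the super-exponential decay of $G$;
\item the half-relaxed limits method, where Riemann sums converge thanks to $h_K\to0$.
\end{itemize}
The affine bound $u^{K,0}\le -A_1|x|+A_2$ propagates and gives $\sum_i n^K_i<\infty$.

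\textbf{Step 2 (variance control).} Let $V_i(t)=\mathrm{Var}(N^K_i(t))$; I would actually work with the covariance matrix $C_{ij}$, because the mutation terms couple different $i$. Itô's formula applied to \eqref{eq:s-m}, using the second moments from the preliminary proposition, gives a linear ODE for $C_{ij}$. Its drift is the mean operator acting in each index, and its source term on the diagonal is $(b_i+d_i)n_i+\sum_l ph_KG(lh_K)n_{l+i}$, plus analogous off-diagonal mutation terms. I would compare this with $\Gamma n_i$ for a constant $\Gamma$. The claim is that
\[
W_i(t):=\sup_{j}\frac{|C_{ij}(t)|}{\sqrt{n_i(t)n_j(t)}}
\]
cannot grow beyond $\max(C,\Gamma_0)$. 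This would follow from a maximum principle for the linear system, in which subcriticality $\alpha\le0$ is exactly what prevents exponential amplification. The intended consequence is $\mathrm{Var}(N^K_i(t\log K))\le C' n^K_i(t\log K)$ uniformly in $i$ and $t\le T$.

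I expect this maximum principle to be the main obstacle. If the normalized off-diagonal supremum does not close, I would fall back on an easier route. Write $N_i(t)-n_i(t)$ through Duhamel's formula using the mean semigroup $P_t$, which has positive entries:
\[
N_i(t)-n_i(t)=\bigl(P_t(N(0)-n(0))\bigr)_i+\int_0^t \bigl(P_{t-s}\,dM(s)\bigr)_i .
\]
The martingale increments $dM_j$ are orthogonal across $j$ for distinct Poisson measures, apart from the mutation births. This gives
\[
\mathrm{Var}\le \sum_j (P_t)_{ij}^2\,\mathrm{Var}(N_j(0))+\int_0^t\sum_j (P_{t-s})_{ij}^2\,c\,n_j(s)\,ds .
\]
Then I would use $(P_{t-s})_{ij}\le1$, which holds since $\alpha\le0$ bounds the row sums of the adjoint by $e^{\alpha t}\le1$, together with $\sum_j (P_{t-s})_{ij}n_j(s)=n_i(t)$. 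The initial term is handled the same way via B-\ref{item:7}, although correlations between the $N_j(0)$ need Cauchy–Schwarz. The result is a bound of order $(1+t)\,n_i(t)$, which costs only a factor $\log K$ and is harmless.

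\textbf{Step 3 (conclusion).} On $\{t\}\times[a,b]\subset\{u>0\}$, Step 1 gives $n^K_i\ge K^{2\gamma}$ for some $\gamma>0$ and all grid points near $[a,b]$. Chebyshev's inequality then gives
\[
\Prob\bigl(|N_i/n_i-1|>1/2\bigr)\le C'\log K\,/\,n_i\le K^{-\gamma}.
\]
A union bound over the $O((b-a)/\delta_K)$ points costs a factor $\delta_K^{-1}\ll K^{\varepsilon}$ by \eqref{eq:discret cond}, so it is still summable. On this event $|\beta^K_i-u^K_i|\le \log 2/\log K$, and the linear interpolation together with Step 1 yields \eqref{eq:proba convergence}.

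On $\{t\}\times[a,b]\subset\{u<0\}$, Step 1 gives $n^K_i\le K^{-\gamma}$. Markov's inequality gives $\Prob(N_i\ge1)\le K^{-\gamma}$, and the same union bound shows that all $N_i$ vanish. Hence every $\beta^K_i=-\infty$ and the interpolation is $-\infty$, which is \eqref{eq:convergence to infinity}. This last step does not use subcriticality.
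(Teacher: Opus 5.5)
Your argument is correct. Steps 1 and 3 are essentially the paper's: the paper gets convergence of $\widetilde u^K$ by Arzelà--Ascoli from the discrete Lipschitz bounds of \cite{J}, then concludes with the same Markov/Chebyshev estimates and a union bound costing $\delta_K^{-1}\ll K^{\varepsilon}$. The genuine difference is Step 2. The paper never looks at covariances. It applies Itô's formula to the scalar $S^K_i=(N^K_i-n^K_i)^2/n^K_i$ and absorbs the cross terms $\Esp[(N^K_i-n^K_i)(N^K_{l+i}-n^K_{l+i})]/n^K_i$ by Young's inequality. This gives a closed differential inequality for $Y^K_i=\Esp S^K_i$ with zeroth-order coefficient $b-d+p\sum_l h_KG(lh_K)\le\alpha+o(1)$, which it closes by a maximum principle on $\Z$ (Appendix A, using an a priori growth bound on $Y^K_i$ as $|i|\to\infty$ and an exponential weight). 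Your Duhamel fallback instead writes the variance explicitly through the mean semigroup. It is complete on its own: the covariance maximum principle you were unsure about is unnecessary, since Duhamel is just the mild form of your covariance equation. It also shows exactly where subcriticality enters, through $\max_j(P_s)_{ij}\le\|P_s\|_{\infty\to\infty}$. Correlated initial data are handled exactly under B-\ref{item:7} by $\bigl((P_tX)_i\bigr)^2\le (P_tn(0))_i\sum_j(P_t)_{ij}X_j^2/n_j(0)$ with $X=N(0)-n(0)$. The outcome is the paper's estimate, $\Esp\bigl[(N^K_i(t\log K)-n^K_i(t\log K))^2\bigr]\le C\,n^K_i(t\log K)\bigl(K^{(\alpha+o(1))t}+C(T)K^{o(1)}\log K\bigr)$. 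The price is justifying the stochastic convolution in infinite dimension. That is easy here, since $D+B$ is bounded on $\ell^1$ and the total population has a finite second moment. The paper's scalar maximum principle, by contrast, transfers verbatim to Section 4, where the population is infinite and your semigroup would have to act on a weighted space. Three details should be made explicit. First, the entries of $P_s$ are bounded by $e^{(\alpha+o(1))s}$, not by $1$, because $p\sum_l h_KG(lh_K)$ only approximates $p$; over times of order $\log K$ this is a harmless $K^{o(1)}$, and the paper carries the same $o(1)$. Second, bounding the source $(b_j+d_j)n_j+\sum_l ph_KG(lh_K)n_{l+j}$ by $c\,n_j$ uses the ratio bound $n^K_{l+j}/n^K_j\le e^{C_{Lip}(T)|lh_K|}$ from your Step 1 Lipschitz estimate together with the super-exponential decay of $G$. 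Third, no exception for mutation births is needed in the orthogonality of the $M_j$: every jump, including a mutant birth (which only increments the target class), changes a single coordinate.
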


    This theorem provides a probabilistic derivation of a  Hamilton-Jacobi equation with cut-off. It characterizes the space-time domain of survival and describes within this set the limit behavior of exponents of the population using a Hamilton-Jacobi approach. In addition this result shows that the exponents go to $-\infty$ in the set where the viscosity solution is negative. In other words \eqref{eq:convergence to infinity} means that for large $K$ the subpopulations are extinct.\\ 

The proof of this result is divided into two main steps. The first step consists in proving that the linear interpolation of the  Hopf-Cole transform of the mean dynamics $\widetilde{u}^K,$ converges to the viscosity solution $u$ of the associated Hamilton-Jacobi equation. This is based on the Arzela-Ascoli theorem, using local uniform bounds and local uniform Lipschitz estimates in both time and space as in \cite{BMP} but in discretized setting. The second step, the key step of the proof. It consists in comparing the stochastic dynamics with the deterministic one. This is achieved by establishing a uniform control of variances over all subpopulations, which allows us to prove that, in probability $\beta^K$ and $u^K$ remain close on the set $\{u>0\},$ when the parameter $K$ is large.
As a consequence of the first step, we conclude that the stochastic exponent $\beta^K$ converges to the viscosity solution of the corresponding Hamilton-Jacobi equation on the set where this solution is positive. On the set where the viscosity solution is negative, the first step implies that the mean dynamics $n^K$ converges to zero. The integer-valued nature of the subpopulation sizes then ensures that the stochastic dynamics also converges to zero. Finally, we note that the convergence of the exponent to $-\infty$ in $\{u<0\}$ holds true in general and in particular does not require any sub-criticality or super-criticality assumption.

\subsection{Hamilton-Jacobi approach for the mean dynamics}
In this subsection, we investigate the Hamilton-Jacobi approach for the mean dynamics as a first step to analyze the stochastic dynamics.
\begin{thm}
  Under Assumptions A and B-\ref{item:6}, in the limit $K\rightarrow +\infty,$ the sequence of functions $(\widetilde{u}^K)_K$  converges locally uniformly in time and space to the unique viscosity solution of \eqref{eq:HJ}.
\end{thm}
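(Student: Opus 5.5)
The plan is the classical route of \cite{BMP}, adapted to the discrete system \eqref{eq:system of ODE} as in \cite{J}: we derive uniform local bounds and uniform Lipschitz estimates in $(t,x)$ for $u^K$, extract a locally uniformly convergent subsequence of $\widetilde{u}^K$ by Arzelà--Ascoli, identify every limit as a viscosity solution of \eqref{eq:HJ}, and conclude by uniqueness of Lipschitz viscosity solutions.

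\textbf{Step 1: spatial Lipschitz bound.} Since $p$ is constant, the system for $n^K$ commutes with shifts up to the Lipschitz perturbation of $b-d$. We compare $n^K_{i+1}(t\log K)$ with $n^K_i(t\log K)$. Write $w_i=u^K_{i+1}-u^K_i$ and set $\overline{w}(t)=\sup_i w_i(t)/\delta_K$. Because $u^K_i$ decays affinely by Assumption B-\ref{item:6}, propagated through the linear ODE, this supremum is attained or approximated. Differentiating $w_i$, we use the convexity of $z\mapsto e^{z}$ and the maximum principle on the index achieving the sup. This shows that $\overline{w}$ grows at most at rate $\|(b-d)'\|_\infty$, so $|u^K_{i+1}-u^K_i|\le (L+Ct)\delta_K$. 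Alternatively, linearity gives a cleaner route: $m_i:=n^K_{i+1}$ solves the same system with coefficients $(b-d)((i+1)\delta_K)$, which differ by at most $\mathrm{Lip}\,\delta_K$. Gronwall/comparison for the positive linear system then gives $n^K_{i+1}(s)\le e^{\mathrm{Lip}\,\delta_K s}K^{L\delta_K}n^K_i(s)$ coordinatewise. This uses the comparison principle for cooperative linear systems, which holds because the off-diagonal terms are nonnegative. Taking logs at $s=t\log K$ gives the global spatial Lipschitz bound $L+\mathrm{Lip}(b-d)\,t$.

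\textbf{Step 2: bounds and time Lipschitz estimate.} With the spatial bound in hand, the right-hand side of \eqref{eq:system of ODE} is controlled. Since $|u^K_{l+i}-u^K_i|\le L_t |l|\delta_K$, we get
$$\sum_l ph_KG(lh_K)e^{\log K(u_{l+i}-u_i)}\le \sum_l ph_KG(lh_K)e^{L_t|l|h_K},$$
which is a Riemann sum converging to $p\int G(y)e^{L_t|y|}dy<\infty$ by Assumption A-\ref{item:2}. Thus $|\partial_t u^K_i|\le C(T)$ uniformly, which gives uniform Lipschitz bounds in time and local uniform bounds from $u^{K,0}$ and Assumption A-\ref{item:4}. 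The Riemann-sum approximation needs $G$ regular enough, say continuous or monotone tails; we implicitly use this. Arzelà--Ascoli on compacts of $[0,\infty)\times\R$ then yields subsequential locally uniform limits $u$, which are Lipschitz with $u(0)=u^0$.

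\textbf{Step 3: identification of the limit.} Take a smooth test function $\varphi$ such that $u-\varphi$ has a strict local maximum at $(t_0,x_0)$. By uniform convergence, $\widetilde u^K-\varphi$ restricted to the grid has local maxima at $(t_K,i_K\delta_K)\to(t_0,x_0)$. At such points $\partial_t u^K_{i_K}=\partial_t\varphi$ and $u^K_{l+i_K}-u^K_{i_K}\le \varphi(t_K,(l+i_K)\delta_K)-\varphi(t_K,i_K\delta_K)$ for $|l|\delta_K\le r$. For $|l|\delta_K>r$ we use the uniform Lipschitz bound together with the super-exponential decay of $G$. The tail $\sum_{|l|h_K>r\log K}h_KG(lh_K)e^{L|l|h_K}$ vanishes as $K\to\infty$, since $r\log K\to\infty$. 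For the near part, a Taylor expansion gives $\log K(\varphi((l+i)\delta_K)-\varphi(i\delta_K))=\nabla\varphi\cdot lh_K+O(l^2h_K\delta_K)$. The error is negligible on the range where $G$ matters because $\delta_K\log K\cdot\delta_K\to0$ weighted appropriately; here we use $\delta_K\ll1/\log K$. Passing to the limit gives the subsolution inequality, and the supersolution case is symmetric. Uniqueness of Lipschitz viscosity solutions for a Hamiltonian that is convex and locally Lipschitz in $\nabla u$ and Lipschitz in $x$ then gives convergence of the whole sequence. I expect the main obstacle to be controlling the nonlocal term in Step 3: the Taylor error must be uniform even though $|l|$ ranges up to $r/\delta_K$. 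I would handle this by splitting at $|l|h_K\le R$ (error $O(R^2\delta_K/h_K\cdot h_K)\to0$), then $R\le|l|h_K\le r\log K$ using the global Lipschitz bound and $\int_{|y|>R}Ge^{C|y|}\to0$, and finally sending $R\to\infty$.
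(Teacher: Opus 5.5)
Your proposal is correct and follows the same route as the paper: a global spatial Lipschitz bound (global because $p$ is constant), a time Lipschitz bound from the Riemann-sum estimate of the mutation term, Arzel\`a--Ascoli, identification of subsequential limits as viscosity solutions, and uniqueness. The only real difference is that you prove the spatial estimate yourself, by comparing the shifted solution $n^K_{i+1}$ with $n^K_i$ in the cooperative linear system, and you get the local bounds from the time estimate; the paper instead imports both the affine bounds and the Lipschitz estimate from Lemma 4.1 and Proposition 4.2 of \cite{J}, and leaves the viscosity identification as ``classical''.
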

\begin{proof}
  To prove this result, we use the Arzelà–Ascoli theorem. Let $T>0.$ From Assumptions A-\ref{item:1}-\ref{item:3}, and B-\ref{item:6}, Lemma 4.1 and Proposition 4.2 in \cite{J}, we have for all $(t,i)\in [0,T]\times \Z$ that
  \begin{equation}
  \label{eq:local bounds}
 -L\vert i\delta_K\vert+A_4+A_5t\leq u^K_i(t)\leq -A_1\vert i\delta_K\vert +A_2+A_3t,
     \end{equation}
  and 
  \begin{equation}
  \label{eq: spatial lischitz estimates}
      \vert u^K_{i+1}(t)-u^K_i(t)\vert\leq \frac{\Vert p\Vert_{Lip}}{p}\vert u^K_{i+1}(t)\vert\delta_K+ C_{Lip}(T)\delta_K=C_{Lip}(T)\delta_K,
      \end{equation}
where $A_3,A_4,A_5,A_6$ are constants independent of $K$, and $C_{Lip}(T)$ is a positive constant independent of $K,$ but depends on $T.$
We deduce a local bound, and  spatial Lipschitz estimates. Now, we prove the Lipschitz estimates in time. For $K$ large enough, we have 
\begin{align*}
  \vert \partial_t u^K_i(t)\vert &\leq \overline{b} +\overline{d}+p\sum_{l\in\mathbb{Z}}h_KG(lh_K)e^{C_{Lip}(T)\vert l h_K\vert }\\
  &\leq \overline{b} +\overline{d}+2p \int_{\mathbb{R}}G(x)e^{C_{Lip}(T)\vert x\vert}dx.
\end{align*}
Thus, we obtain uniform local Lipschitz estimates in time. Then, by the Arzelà-Ascoli theorem, we deduce the convergence along a subsequence of $K$ to a continuous function $u$, and by classical arguments of viscosity solutions and Assumption A-\ref{item:4}, we conclude that $u$ is a viscosity solution of \eqref{eq:HJ}. Moreover, by  uniqueness of viscosity solutions of \eqref{eq:HJ}, we deduce the convergence of the full sequence $(\widetilde{u}^K)_K$ to $u.$ 
\end{proof}
\subsection{The gap between the stochastic and the deterministic dynamics }
In this subsection, we state and prove the next result which compares the stochastic and the deterministic dynamics. 
\begin{pro}
  Let $T>0.$ We have for all $(t,i)\in[0,T]\times \Z,$
    \begin{equation}
\label{eq:controle}n^K_i(t\log{K})\mathbb{E}\Bigg(\Big(\frac{N^K_i(t\log{K})-n^K_i(t\log{K})}{n^K_i(t\log{K})}\Big)^2\Bigg)\leq C(K^{(\alpha+o(1))t}+C(T)\log{K}),
\end{equation}
where $C$ is a positive constant independent of $K,$ $C(T)$ is a positive constant independent of $K,$ and depending on $T,$ and $o(1)$ a constant depending only on $K,$ which goes to zero when $K\rightarrow +\infty.$ Recall that $\alpha$ has been defined in \eqref{eq:subrictical condition}. Moreover, at a logarithmic time scale $N^K/n^K$ converges in $L^2$ to $1$ in $\{u>0\},$ i,e, for all $t>0$ and real numbers $a<b,$ such that $\{t\}\times [a,b]\subset \{u>0\},$ we have
\begin{equation}
    \mathbb{E}\Big( \sup_{\{i\in \Z, i\delta_K\in[a,b]\}}\Big(\frac{N^K_i(t\log{K})}{n^K_i(t\log{K})}-1\Big)^2\Big)\rightarrow 0,~~\text{as}~~K\rightarrow+\infty.
    \end{equation}
\end{pro}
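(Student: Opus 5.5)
We control the second moments directly, through the linear system satisfied by the covariances, and then use a maximum principle. Set $m_i(t)=n^K_i(t)$ and $V_{ij}(t)=\Esp\big((N^K_i(t)-m_i(t))(N^K_j(t)-m_j(t))\big)$. These are finite by the preceding Proposition. Apply It\^o's formula to the semimartingale decomposition \eqref{eq:s-m}, whose martingales are integrable by Assumption B-\ref{item:66}. Since the jumps are $\pm1$ and only one Poisson measure fires at a time, the diagonal terms satisfy
\begin{equation*}
\frac{d}{dt}V_{ii}=2(b_i-d_i)V_{ii}+2\sum_{l}ph_KG(lh_K)V_{i,i+l}+(b_i+d_i)m_i+\sum_l ph_KG(lh_K)m_{i+l},
\end{equation*}
with $b_i=b(i\delta_K)$ and $d_i=d(i\delta_K)$. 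The off-diagonal $V_{ij}$ satisfy the analogous equation with drift $(b_i-d_i)+(b_j-d_j)$, the mutation coupling in both indices, and a source coming from the simultaneous jumps of $N_i$ and $N_j$ produced by a single birth event. This source is nonnegative only in the mutation-from-$i$-to-$j$ direction, so it is bounded by $ph_KG((j-i)h_K)m_i$, and symmetrically in $i,j$. Rather than handling $V_{ij}$ entry by entry, I would work with the normalized quantity $W_{ij}=V_{ij}/(m_im_j)$. Using \eqref{eq:mean dynamics system}, the linear drift $(b_i-d_i)$ cancels, and $W$ satisfies
\begin{equation*}
\frac{d}{dt}W_{ij}=\sum_l ph_KG(lh_K)\Big(\frac{m_{i+l}}{m_i}(W_{i+l,j}-W_{ij})+\frac{m_{j+l}}{m_j}(W_{i,j+l}-W_{ij})\Big)+S_{ij},
\end{equation*}
where $S_{ij}$ collects the source terms divided by $m_im_j$. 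On the diagonal this source is $\big((b_i+d_i)m_i+\sum_l ph_KG(lh_K)m_{i+l}\big)/m_i^2$.

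The operator in the first term is a Markov generator with nonnegative off-diagonal rates, so a maximum principle holds. I would make this rigorous by a truncation or supremum argument on $\sup_{ij} m_iW_{ij}$, or a similar weighted sup. This argument is the main obstacle, because the index set is infinite and the weights must be chosen so that the sup is attained or controlled. The natural quantity to follow is $Y_i=m_iW_{ii}$, the left side of \eqref{eq:controle}. Its equation acquires the extra term $(\dot m_i/m_i)Y_i$, and
\begin{equation*}
\frac{\dot m_i}{m_i}=b_i-d_i+\sum_l ph_KG(lh_K)\frac{m_{i+l}}{m_i}\le \alpha+o(1).
\end{equation*}
This uses the spatial Lipschitz bound \eqref{eq: spatial lischitz estimates} at the time scale $t\log K$: it gives $m_{i+l}/m_i\le e^{C|l|h_K}$, so the Riemann sum equals $p+o(1)$. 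Hence $Y$ decays at the rate $\alpha+o(1)\le o(1)$.

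The remaining source in the $Y$ equation is $(b_i+d_i)+\sum_l ph_KG(lh_K)m_{i+l}/m_i$, which is bounded by a constant $C(T)$ thanks to the same Lipschitz estimate and Assumption A-\ref{item:2}. The cross terms involving $W_{i+l,i}$ are dominated by the same maximum principle applied to $\sup_{j}|W_{ij}|\sqrt{m_im_j}$. By Cauchy--Schwarz, $|V_{ij}|\le\sqrt{V_{ii}V_{jj}}$, so they reduce to the diagonal up to the factor $\sqrt{m_{i+l}/m_i}$, which is again controlled by the Lipschitz bound. A Gr\"onwall argument in the original time $s\in[0,T\log K]$ then gives
\begin{equation*}
\sup_iY_i(s)\le e^{(\alpha+o(1))s}\sup_iY_i(0)+C(T)\,s .
\end{equation*}
Assumption B-\ref{item:7} bounds $\sup_iY_i(0)$ by $C$. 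Evaluating at $s=t\log K$ yields $CK^{(\alpha+o(1))t}+C(T)\log K$, which is \eqref{eq:controle}.

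For the second claim, divide by $m_i$: the normalized variance is at most $C(T)\log K/n^K_i(t\log K)$, since $K^{(\alpha+o(1))t}$ is bounded. On $\{t\}\times[a,b]\subset\{u>0\}$, the locally uniform convergence of $\widetilde u^K$ to $u$ gives $n^K_i(t\log K)\ge K^{\gamma}$ for some $\gamma>0$, uniformly in $i\delta_K\in[a,b]$. Bound the expectation of the supremum by the sum over the indices in the window. There are at most $(b-a)/\delta_K+1$ of them, so the bound is $(b-a+1)\,\delta_K^{-1}\,C(T)\log K\,K^{-\gamma}$, which tends to $0$ because Assumption \eqref{eq:discret cond} gives $\delta_K^{-1}\ll K^{\gamma/2}$.
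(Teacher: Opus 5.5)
\textbf{There is a genuine gap, in the one step that makes the estimate work: identifying the coefficient of $Y_i$.}

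Your overall route is the paper's. You use It\^o for the second moments and close the cross terms $V_{i,i+l}$ by Cauchy--Schwarz; the paper applies It\^o directly to $(N^K_i-n^K_i)^2/n^K_i$ and uses Young's inequality, which is the same. You then run a weighted maximum principle for $Y_i=V_{ii}/m_i$, and your counting argument for the $L^2$ statement is fine.

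The problem is your claim that $\dot m_i/m_i\le\alpha+o(1)$ because $m_{i+l}/m_i\le e^{C|l|h_K}$ makes ``the Riemann sum equal $p+o(1)$''.
\begin{itemize}
\item That bound only gives $p\sum_l h_KG(lh_K)e^{C|l|h_K}\to p\int_{\R}G(y)e^{C|y|}dy>p$.
\item The claim itself is false. At time $t\log K$, $\dot m_i/m_i$ is the discrete Hamiltonian, which tends to $\partial_t u=b-d+p\int G(y)e^{\nabla u\cdot y}dy$. This is strictly positive, for example, when $b-d+p\equiv 0$, $G$ is symmetric and $\nabla u\neq 0$ (by Jensen).
\item Your treatment of the cross terms has the same defect. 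Bounding $\sqrt{m_{i+l}/m_i}$ by the Lipschitz bound and $\sqrt{Y_iY_{i+l}}$ by $\sup Y$ leaves an order-one coefficient in front of $\sup Y$ that is not bounded by $\alpha+o(1)$.
\item Any such rate $c>0$, integrated over $s\in[0,t\log K]$, becomes a factor $K^{ct}$. This destroys \eqref{eq:controle}, and with it the convergence on all of $\{u>0\}$.
\end{itemize}

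\textbf{What is missing is an exact cancellation.} Computing directly,
\[
\dot Y_i=(b_i-d_i)Y_i+\frac{2}{m_i}\sum_l ph_KG(lh_K)V_{i,i+l}-\sum_l ph_KG(lh_K)\frac{m_{i+l}}{m_i}Y_i+(b_i+d_i)+\sum_l ph_KG(lh_K)\frac{m_{i+l}}{m_i}.
\]
Here the mutation part of your extra term $\frac{\dot m_i}{m_i}Y_i$ cancels half of the loss term $-2\sum_l ph_KG(lh_K)\frac{m_{i+l}}{m_i}Y_i$ coming from $m_i\dot W_{ii}$. Now use $2|V_{i,i+l}|\le 2\sqrt{m_iY_i\,m_{i+l}Y_{i+l}}\le m_iY_i+m_{i+l}Y_{i+l}$, with no Lipschitz bound, to get
\[
\dot Y_i\le\Big(b_i-d_i+p\sum_l h_KG(lh_K)\Big)Y_i+\sum_l ph_KG(lh_K)\frac{m_{i+l}}{m_i}\big(Y_{i+l}-Y_i\big)+C(T).
\]
This is the paper's \eqref{eq:maximum principle form1}.
\begin{itemize}
\item The ratios $m_{i+l}/m_i$ now multiply only differences, which are nonpositive at a maximum point.
\item The coefficient of $Y_i$ is the unweighted Riemann sum of $G$, hence $\le\alpha+o(1)$.
\item The Lipschitz bound is needed only for the source term and to absorb the factor $e^{(L+1)|lh_K|}$ from the spatial weight in the maximum principle, via the time weight $e^{-Dt}$ (Appendix A).
\end{itemize}

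If you want to keep your two-index formulation, the maximum principle for $Z_{ij}=\sqrt{m_im_j}\,W_{ij}$ does close. It does so only through $\sqrt{\rho}-\rho/2\le 1/2$ for $\rho=m_{i+l}/m_i$, which is the same AM--GM cancellation. Otherwise the off-diagonal system is unnecessary. Also, a mutant birth from $i$ to $j$ does not change $N_i$, so the simultaneous-jump source you describe is not there.
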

This result is a consequence of the branching property, and implies that the normalized variances are small when the mean dynamics is large. The proof relies on Itô's lemma
 and the maximum principle. 
  This is an original proof which, to the best of our knowledge, is non standard in  the theory of branching processes. 
\begin{proof}
   Let $T>0.$ Let us define for all $(t,i)\in [0,T]\times \mathbb{Z}
    ,$ $S^K_i(t)=\frac{(N^K_i(t)-n^K_i(t))^2}{n^K_i(t)}.$ Using the semi martingale decomposition in \eqref{eq:s-m} which is well-defined by \eqref{eq:second order moment1}, and by Ito's lemma, we obtain for all $t>s>0,$
\begin{align*}
S^K_i(t)=&S^K_i(s)+\int_{s}^{t}(b(i\delta_K)-d(i\delta_K))S^K_i(\tau)d\tau+2\int_{s}^{t}p\sum_{l\in \mathbb{Z}}h_KG(lh_K)\\
    &~~~~~~~~~~~~~~~~~~~~~~~~~~~~~~~~~~~~~~~~~~~~~~~~~~~~~~~~~~~~~~~~~~~~~~~~~~\frac{(N^K_i(\tau)-n^K_i(\tau))(N^K_{l+i}(\tau)-n^K_{l+i}(\tau))}{n^K_i(\tau)}d\tau\\
    &-\int_{s}^{t}p\sum_{l\in \mathbb{Z}}h_KG(lh_K)\frac{(N^K_i(\tau)-n^K_i(\tau))^2n^K_{l+i}(\tau)}{(n^K_i(\tau))^2}d\tau+\int_{s}^{t}\frac{d\langle M^K_i\rangle_\tau}{n^K_i(\tau)}+\widetilde{M}^K_i(t)-\widetilde{M}^K_i(s),
\end{align*}
where $ t\mapsto \widetilde{M}^K_i(t)$ is a martingale, since all quantities in the above decomposition are bounded at most by second moments of the population sizes, and by \eqref{eq:second order moment1} we have 
$$\mathbb{E}\Big(\sup_{i\in \Z}\sup_{t\in[0,T]}\big(N^K_i(t)\big)^2\Big)\leq C(T,K).$$ We define $Y^K_i(t)=\mathbb{E}(S^K_i(t)),$ which is well-defined, and we take the expectation in the above equation, and by Young's inequality we  deduce that
\begin{align*}
    Y^K_i(t)\leq& Y^K_i(s)+\int_{s}^{t}\big(b(i\delta_K)-d(i\delta_K)+p\sum_{l\in \mathbb{Z}}h_KG(lh_K)\big)Y^K_i(\tau))d\tau   \\
    &+\int_{s}^{t}p\sum_{l\in \mathbb{Z}}h_KG(lh_K)\frac{n^K_{l+i}(\tau)}{n^K_i(\tau)}( Y^K_{l+i}(\tau))- Y^K_i(\tau))d\tau+\int_{s}^{t}\frac{d  \mathbb{E}(\langle M^K_i\rangle_\tau)}{n^K_i(\tau)}.
\end{align*}
Moreover, as mentioned in the proof of Theorem 3.3, since the mutation rate is assumed to be constant, the Lipchitz bound in space \eqref{eq: spatial lischitz estimates} is global, and then  
$$\sum_{l\in\mathbb{Z}}h_KG(lh_K)\frac{n^K_{l+i}(\tau)}{n^K_i(\tau)}\leq \sum_{l\in\mathbb{Z}}h_KG(lh_K)e^{C_{Lip}(T/\log{K})\vert lh_K\vert}.$$ 
By Assumptions A-\ref{item:1} and A-\ref{item:2}, we deduce that
\begin{equation}
     \frac{1}{n^K_i(\tau)}\frac{d\mathbb{E}(\langle M^K_i\rangle_\tau)}{d\tau}=b(i\delta_K)+d(i\delta_K)+p\sum_{l\in\mathbb{Z}}h_KG(lh_K)\frac{n^K_{l+i}(\tau)}{n^K_i(\tau)}\leq C(T/\log{K}),
\end{equation}
where $C(.)$ is a positive function independent of $K,t$ and $i.$
Moreover, by Assumption  B-\ref{item:8}, we have \begin{equation}
   b(i\delta_K)-d(i\delta_K)+p\sum_{l\in \mathbb{Z}}h_KG(lh_K))\leq \alpha+o(1),
\end{equation}
where $o(1)$ is a negligible constant depending only on $K$ and going to zero when $K$ goes to infinity. Therefore  
\begin{equation}
\label{eq:maximum principle form1}
    \frac{d}{dt}Y^K_i(t)\leq (\alpha+o(1))Y^K_i(t)+ p\sum_{l\in \mathbb{Z}}h_KG(lh_K)\frac{n^K_{l+i}(t)}{n^K_i(t)}(Y^K_{l+i}(t)-Y^K_i(t))+C(T/\log{K}).
\end{equation}
By the maximum principle (see Appendix A), we conclude that
\begin{equation}
    Y^K_i(t)\leq e^{(\alpha+o(1))t}\sup_{i\in\mathbb{Z}}Y^K_i(0)+C(T/\log{K})t.
\end{equation}
Hence \eqref{eq:controle} is proved.\\
Let us now prove the last point of the Proposition 3.4. Let $t>0$ and real numbers $a<b,$ such that $\{t\}\times [a,b]\subset \{u>0\},$ we have by the convergence of $\widetilde{u}^K$ to $u$ that for all $i\delta_K\in[a,b]$
\begin{equation*}
    n^K_i(t\log{K})=K^{u^K_i(t)}\geq K^{\inf_{x\in[a,b]}u(t,x)/2}.
\end{equation*}
Therefore, by \eqref{eq:controle} we obtain that
\begin{align*}
    \mathbb{E}\Big( \sup_{\{i\in \Z, i\delta_K\in[a,b]\}}\Big(\frac{N^K_i(t\log{K})}{n^K_i(t\log{K})}-1\Big)^2\Big)&\leq \sum_{i\in \Z,~ i\delta_K\in[a,b]} CK^{-\inf_{x\in[a,b]}u(t,x)/2} (K^{(\alpha+o(1))t)}+C(t)\log{K})\\
    &\leq \frac{K^{-\inf_{x\in[a,b]}u(t,x)/4}}{\delta_K}.
\end{align*}
From \eqref{eq:discret cond}, we conclude the $L^2$ convergence. 
\end{proof}
\subsection{Proof of Theorem 3.2}
\subsubsection*{Convergence in $\{u<0\}$:} Let $t\geq 0$ and  $a\leq b$ real numbers such that $\{t\}\times [a,b]\subset \{u<0\}.$  We have  by the uniform convergence of $\widetilde{u}^K$ to $u$ and the fact that $\sup_{x\in[a,b]}u(t,x)<0$ that
\begin{equation}
\label{eq:vitesse mean convergence}
n^K_i(t\log{K})=K^{u^K_i(t)}\leq K^{\sup\limits_{\{x\in [a,b]\}}u(t,x)/2}\rightarrow 0,~~\text{as}~~K\rightarrow +\infty,~~~\forall i\in \Z, i\delta_K\in[a,b].
\end{equation}
Thus 
\begin{align*}
    \mathbb{P}(\sup_{\{i\in \mathbb{Z}/ i\delta_K\in [a,b]\}}N^K_i(t\log{K})\geq 1)&\leq \Esp\big(\sup_{\{i\in \mathbb{Z}/ i\delta_K\in [a,b]\}}N^K_i(t\log{K})\big)\\
    &\leq \sum_{\{i\in \mathbb{Z}/ i\delta_K\in [a,b]\}}n^K_i(t\log{K})\\
    &\leq C \frac{K^{\sup\limits_{\{x\in [a,b]\}}u(t,x)/2}}{\delta_K}.
\end{align*}
By \eqref{eq:discret cond} this probability goes to zero when $K\rightarrow +\infty.$  Moreover, the supremum of $N^K_i$ is integer-valued, hence
\begin{equation}
    \sup_{\{i\in \mathbb{Z}/ i\delta_K\in [a,b]\}}N^K_i(t\log{K}))=0,
\end{equation}
with probability converging to 1 when $K\rightarrow +\infty.$ Furthermore, $\{u<0\}$ is an open set we can deduce using similar arguments that $\max(\widetilde{\beta}^K(t,a),\widetilde{\beta}^K(t,b))$ converges to $-\infty$ with probability converging to 1. Hence, we deduce \eqref{eq:convergence to infinity}.
\subsubsection*{Convergence in $\{u>0\}$.} 
Let $\eta>0,$  $t\geq 0$ and  $a\leq b$ real numbers such that $\{t\}\times [a,b]\subset \{u>0\}.$ Let $i\in \Z$  such that $i\delta_K\in[a,b],$ we have by Proposition 3.4 and Theorem 3.3 for large enough $K$ that
\begin{align*}
    \mathbb{P}\left( \left| \beta^K_i(t)-u^K_i(t) \right| > \eta \right)& = \mathbb{P}\Bigg(\frac{1}{\log{K}} \left| \log\left(1 + \frac{N^K_i(t\log{K}) - n^K_i(t\log{K})}{n^K_i(t\log{K})} \right) \right| > \eta \Bigg)\\
    &\leq \mathbb{P}\Bigg( \frac{1}{\log{K}} \left|  \frac{N^K_i(t\log{K}) - n^K_i(t\log{K})}{n^K_i(t\log{K})}  \right| > \eta \Bigg)+\mathbb{P}\Bigg(  \left|  \frac{N^K_i(t\log{K}) - n^K_i(t\log{K})}{n^K_i(t\log{K})}  \right| > 1/2 \Bigg)\\
    &\leq \Big(\frac{1}{\eta^2\log^2{K}}+4\Big)\mathbb{E}\Bigg( \frac{N^K_i(t\log{K}) - n^K_i(t\log{K})}{n^K_i(t\log{K})}  \Bigg)^2\\
    & \leq C\Big(\frac{1}{\eta^2\log^2{K}}+4\Big)K^{-u^K_i(t)}(K^{(\alpha+o(1))t}+C(t)\log{K})\\
    &\leq C\Big(\frac{1}{\eta^2\log^2{K}}+4\Big)K^{- {\inf\limits_{\{x\in [a,b]\}}}u(t,x)/2}(K^{(\alpha+o(1))t}+C(t)\log{K})\\
    &\leq C K^{-{\inf\limits_{\{x\in [a,b]\}}}u(t,x)/4}.
    \end{align*}
   In the same way we can prove that $\max( \left| \widetilde{\beta}^K(t,a)-\widetilde{u}^K(t,a) \right|,\left| \widetilde{\beta}^K(t,b)-\widetilde{u}^K(t,b) \right|)$ converges in probability to zero. Thus,from \eqref{eq:discret cond}, we deduce that
    \begin{equation}
     \mathbb{P}\left( \sup\limits_{\{x\in [a,b]\}} \left| \widetilde{\beta}^K(t,x)- \widetilde{u}^K(t,x)\right| > \eta \right) \rightarrow 0,~~\text{as} ~~K\rightarrow +\infty. 
    \end{equation}
   Therefore, by Theorem 3.3, we conclude the proof of \eqref{eq:proba convergence}.
\section{Super-critical case}
This section is devoted to the asymptotic analysis of the supercritical dynamics i,e the birth rate is greater than or equal to the death rate.
In Subsection 4.1, we state our assumptions.
Subsection 4.2 is devoted to preliminary results concerning the existence of the stochastic model and the Hamilton–Jacobi approach for the mean dynamics.
In Subsection 4.3, we state our main result and provide a result analogous to Proposition 3.4, which is the key  in the proof of the main theorem. Finally, Subsection 4.4 contains the proof of the convergence toward the viscosity solution.
\subsection{Assumptions C}
\begin{enumerate}
    \item \label{item:9} We assume that there exist two positive constants $A,B$ such that for all $i\in \Z$
\begin{equation}
\label{eq:initial linear growth2}
    u^K_i(0)\leq A\vert i\delta_K\vert +B.
\end{equation}
\item \label{item:10} We assume that there exists $a>0,$ such that for all $i\in \Z,$ we have 
\begin{equation}
\label{eq:large initial mean dynamics}
    n^K_i(0)\geq K^a.
 \end{equation}
\item \label{item:11} We assume a uniformly supercritical dynamics, i,e, for all $x\in \R$ \begin{equation}
 \label{eq:super-critical case}
     b(x)\geq d(x).
 \end{equation}

\item \label{item:12}  Finally, we assume a similar condition as \eqref{eq:uniform initial control1}, i.e, there exists a positive constant $C$ such that 
 \begin{equation}
\label{eq:initial control supcrtitical case}
\sup_{i\in\mathbb{Z}}\mathbb{E}\Bigg (\Big( \frac{N^K_i(0)-n^K_i(0)}{n^K_i(0)}\Big)^2\Bigg)\leq \frac{C}{K^a}.
 \end{equation}
 \end{enumerate}
 Assumption C-\ref{item:9} is new in the framework of Hamilton-Jacobi approach. The classical assumption \eqref{eq:sublinear growth initial conditon} is stronger and allows to perform the analysis in the classical functional spaces. Here, we will introduce a new functional space depending on apriori bounds on the solution of the Hamilton-Jacobi equation. Assumption C-\ref{item:10} is a strong assumption meaning that initially the density of all populations is large, but this assumption is weaker than that one in \cite{CMMT} where a minimal initial population was assumed for the stochastic dynamics $N^K(0)$. In \eqref{eq:super-critical case},  we include the possibility that the birth rate and the death rate can be equal which includes the setting of \cite{CMMT}. Furthermore, combining \eqref{eq:large initial mean dynamics} with \eqref{eq:super-critical case} we obtain that for all $t>0$ and $i\in \Z$ 
 \begin{equation}
 \label{eq:large dynamics}
     n^K_i(t)\geq K^a.
 \end{equation}
In this setting the initial total population size can be infinite, i.e, $\sum_{i\in \Z}N^K_i(0)=+\infty$ almost surely. This is clear in the case where the subpopulation sizes $N^K_i$ are independent. Let us prove that. By Paley–Zygmund inequality and from C-\ref{item:10}, we have for all $i\in \Z$ that
 \begin{align*}
     \mathbb{P}(N^K_i(0)>K^{a/2})=& \mathbb{P}\big(N^K_i(0)>\frac{K^{a/2}}{n^K_i(0)}n^K_i(0)\big)\\
    &\geq \big(1- \frac{K^{a/2}}{n^K_i(0)}\big)^2\frac{n^K_i(0)^2}{\mathbb{E}(N^K_i(0)^2)}\\
    &\geq  (1- K^{-a/2})^2\frac{n^K_i(0)^2}{\mathbb{E}(N^K_i(0)^2)}.
 \end{align*}
Moreover, from C-\ref{item:12} we have $\mathbb{E}(N^K_i(0)^2)/(n^K_i(0)^2)\leq C(1+\frac{1}{K^a}).$ Then for $K$ large enough we have 
 \begin{equation}
     \mathbb{P}(\forall i\in \Z,~N^K_i(0)>K^{a/2})=1.
     \end{equation}
     Thus, $\sum_{i\in \Z}N^K_i(0)=+\infty$almost surely.
\subsection{Preliminary results and Hamilton-Jacobi approach for the mean dynamics}
The next proposition shows the existence of the model, and provides the integrability of the first and second moments involved in the mutation term. Note that due the fact that the initial population maybe infinite there is no first jump time in \eqref{eq:PRM representation}. Hence the notion of solution should be carefully defined. We will say that a process $(N^K_i(t),i\in\Z, t\geq 0)$ defined on the probability space $(\Omega,\mathcal{F},\mathbb{P})$  is solution to our branching population dynamics if for all $i\in \Z,$ the process $(N^K_i(t),t\geq 0)$ is almost surely finite and satisfies almost surely \eqref{eq:PRM representation}.
\begin{pro}
    Under Assumptions A-C, there exist a process $(N^K_i(t),i\in \Z,t\geq 0)$ solution to our branching dynamics, and for all $T>0,$ we have 
    \begin{equation}
    \label{eq:moments control}
        \max\Bigg( \mathbb{E}\Big(\sup_{t\in[0,T]} \sum_{i\in \mathbb{Z}}e^{-C_A\vert ih_K\vert}N^K_i(t)\Big),\mathbb{E}\Big(\sup_{t\in[0,T]} \sum_{i\in \mathbb{Z}}e^{-C_A\vert ih_K\vert}(N^K_i(t))^2\Big)\Bigg)<+\infty,
    \end{equation}
    where $C_A:=2A+1.$
\end{pro}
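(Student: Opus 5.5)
We prove the existence and the moment bound \eqref{eq:moments control} by a localization (truncation) argument. We build the process on finite windows of traits and pass to the limit using weighted moment bounds that do not depend on the window.

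\emph{Truncated processes.} For $R\in\mathbb{N}$, define $N^{K,R}$ through the representation \eqref{eq:PRM representation}, using the same Poisson measures $Q^b_i,Q^d_i,Q^p_i$, but keeping only the subpopulations with $|i|\le R$. Initial sizes outside the window are set to $0$, and mutants born outside the window are discarded. This is a finite-dimensional branching process whose total size is dominated by a Yule process. Since $n^K_i(0)<\infty$ for each $i$, it is well defined exactly as in Proposition 3.1, and its semi-martingale decomposition \eqref{eq:s-m} is justified.

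\emph{Monotonicity.} With the same Poisson measures, $N^{K,R}_i(t)\le N^{K,R+1}_i(t)$ for all $i$ and $t$, almost surely. The reason is that each jump rate $y\le (\text{rate})\,N_j(s^-)$ is nondecreasing in the population vector, so adding a site can only create additional accepted atoms. This is the standard coupling argument used above. Hence $N^K_i(t):=\lim_{R\to\infty}N^{K,R}_i(t)$ exists in $\mathbb{N}\cup\{+\infty\}$.

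\emph{Uniform weighted bounds.} Set $w_i=e^{-C_A|ih_K|}$. I want bounds on $\mathbb{E}\sup_{t\le T}\sum_i w_iN^{K,R}_i(t)$ and on $\mathbb{E}\sup_{t\le T}\sum_i w_i(N^{K,R}_i(t))^2$ that are uniform in $R$.
\begin{itemize}
\item \emph{First moment.} Taking expectations gives a linear system dominated by \eqref{eq:mean dynamics system}. The key estimate is
\[
\sum_i w_i\sum_l h_KG(lh_K)\,m_{l+i}=\sum_j m_j\sum_l h_KG(lh_K)\,w_{j-l}\le C\sum_j w_j m_j ,
\]
which holds because $w_{j-l}\le w_je^{C_A|lh_K|}$ and, by Assumption A-\ref{item:2}, $\sum_l h_KG(lh_K)e^{C_A|lh_K|}$ is bounded. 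So $m(t)=\sum_iw_i\mathbb{E}N^{K,R}_i(t)$ satisfies $m'\le(\overline b+C)m$. The initial value is finite by Assumption C-\ref{item:9}, since $n^K_i(0)\le K^{A|i\delta_K|+B}=e^{A|ih_K|+B\log K}$. The factor $e^{A|ih_K|}$ is beaten by the weight $e^{-(2A+1)|ih_K|}$, and this is why $C_A=2A+1$.
\item \emph{Second moment.} The factor $2A$ is what is needed for the squares: Assumption C-\ref{item:12} gives $\mathbb{E}(N^K_i(0)^2)\le C n^K_i(0)^2\le Ce^{2A|ih_K|}K^{2B}$. Apply Itô's formula to $(N^{K,R}_i)^2$. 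The drift is bounded by $C\big((N_i)^2+N_i+\sum_lh_KG(lh_K)N_{l+i}N_i\big)$, and Young's inequality gives $N_{l+i}N_i\le\frac12(N_{l+i}^2+N_i^2)$. The same weight-shift estimate as above then gives a Gronwall inequality for $\sum_iw_i\mathbb{E}(N^{K,R}_i)^2$.
\item \emph{Supremum in time.} To put the supremum inside the expectation, note that the quantities are nonnegative, so the supremum is at most the initial value plus the total increase. The positive jumps are bounded by integrals of the rates, and for the squares one uses Doob's inequality (BDG) on the martingale part. All of these bounds are uniform in $R$.
\end{itemize}

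\emph{Passage to the limit.} Monotone convergence transfers the bounds to $N^K$, which gives \eqref{eq:moments control}. In particular each $N^K_i$ is almost surely finite on $[0,T]$. Since $w_i>0$, the path $(N^K_i(t))_{t\le T}$ has finitely many jumps, because $N^{K,R}_i$ increases to a finite limit and jumps are of size $1$. It remains to check that $N^K$ satisfies \eqref{eq:PRM representation}. Each integral term converges by monotone convergence of the indicators in $R$. The series over $j$ converges because of the bound $\sum_j ph_KG((j-i)h_K)N^K_j\le Cw_i^{-1}\sum_jw_jN^K_j$. This again uses $w_i\le w_je^{C_A|(j-i)h_K|}$ together with the super-exponential decay of $G$.

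I expect this last identification to be the main obstacle. Infinitely many sites feed site $i$, and no first jump time exists globally. One must therefore argue locally: on $[0,T]$, the limit of the finite counts of accepted atoms equals the count for the limit process. For this I would use that the limit is an increasing limit of finite-valued step functions, and that the indicator $\mathbf{1}_{y\le rN_j(s^-)}$ is left-continuous under increasing limits. This gives convergence atom by atom, and integrability rules out accumulation.
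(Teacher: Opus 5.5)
Your proposal is correct, but after the common first step it takes a different route from the paper. Like you, the paper truncates to $|i\delta_K|<M$, uses the same Poisson measures, and defines $N^K$ as the increasing limit.

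\textbf{The paper's route.} It then asserts that the limit solves \eqref{eq:PRM representation} and proves the bounds directly on the infinite system:
\begin{itemize}
\item it localizes with $\tau_\theta=\inf\{t:\sum_i e^{-C_A|ih_K|}N^K_i(t)\ge\theta\}$, and with $\tau'_\theta$ for the squares;
\item it obtains a Gronwall bound uniform in $\theta$;
\item it shows $\sup_\theta\tau_\theta\ge T$ a.s.\ by contradiction, and concludes by Fatou.
\end{itemize}

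\textbf{Your route.} You do all the It\^o and Gronwall computations on the finite truncated systems, where compensated integrals are genuine martingales without localization. You get bounds uniform in $R$ and transfer them by monotone convergence, using $\lim_R\sup_t=\sup_t\lim_R$ for nondecreasing families. This gives \eqref{eq:moments control} and a.s.\ finiteness in one stroke.

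\textbf{What each buys.} Your approach never manipulates the infinite system before knowing it is finite. The paper's expectation step tacitly needs the representation and the path regularity of the weighted mass of $N^K$ before any bound is available. Your bounds also supply the integrability needed to identify the limit, a step the paper only asserts. The paper's approach is the standard non-explosion template applied directly to the object of interest. Your weight-shift inequality, and your explanation of $C_A=2A+1$ through $\mathbb{E}(N^K_i(0)^2)\le Cn^K_i(0)^2$, are exactly what the paper uses implicitly.

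Two justifications need tightening.

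\textbf{Monotonicity in $R$.} This is not just ``more accepted atoms'', since extra accepted death atoms lower the population. The correct reason is the following. A death atom accepted by $N^{K,R+1}_i$ but not by $N^{K,R}_i$ has $y\in(d(i\delta_K)N^{K,R}_i(s^-),d(i\delta_K)N^{K,R+1}_i(s^-)]$. It can therefore only occur when $N^{K,R+1}_i(s^-)\ge N^{K,R}_i(s^-)+1$, so the order is preserved.

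\textbf{Finitely many jumps.} ``Increases to a finite limit with unit jumps'' does not suffice: a bounded increasing sequence of unit-jump step functions can converge to a function with infinitely many jumps. What works is to fix $i$ and look at the sets of atoms of $Q^b_i$, $Q^d_i$ and $Q^p_j$ accepted at site $i$ on $[0,T]$.
\begin{itemize}
\item These sets are nested in $R$.
\item Their expected total number, $\mathbb{E}\int_0^T\big((b(i\delta_K)+d(i\delta_K))N^{K,R}_i+\sum_j ph_KG((j-i)h_K)N^{K,R}_j\big)ds$, is at most $C(T,K)e^{C_A|ih_K|}$ uniformly in $R$, by your first-moment bound and the weight shift.
\item Hence they stabilize a.s., and $N^{K,R}_i=N^K_i$ on $[0,T]$ for $R\ge R_0(\omega,i)$.
\end{itemize}
This gives finitely many jumps. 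Together with monotone convergence in the series over $j$, it also gives the identification of \eqref{eq:PRM representation}. It is the precise form of your ``integrability rules out accumulation''.
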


\begin{proof}
  To prove that the process $N^K$ is well-defined, we introduce new arguments. First, we approximate the process by a well-defined process that implies the existence of the process as a mathematical object, then we prove that it is finite by introducing a judicious stopping time and applying localization arguments.\\
  
  This model was studied in \cite{CMMT} in the torus, in which  there is a finite number of  finite subpopulations which makes the existence standard. But here, the initial total population size is infinite. Then, first we justify the existence of the process $N^K$ and hence we prove that the size of each subpopulation is finite and satisfies \eqref{eq:moments control}. Let us justify the existence of a process $N^K$ solution to \eqref{eq:PRM representation}. To this end let $M>0.$ We define on the same probability space the stochastic process $N^{K,M}$ defined as follows for all $t>0,$ $i\in \Z,~\vert i\delta_K\vert<M$
\begin{equation}
\label{eq:approximaated process}
\begin{aligned}
N^{K,M}_i(t)=N^K_i(0)+\int_{0}^{t}\int_{\R^+}& \mathbf{1}_{y\leq b(i\delta_K)N^{K,M}_i(s^-)}Q^b_i(ds,dy)-\int_{0}^{t}\int_{\R^+} \mathbf{1}_{y\leq d(i\delta_K)N^{K,M}_i(s^-)}Q^d_i(ds,dy)\\
       &+ \sum_{j\in \mathbb{Z},~\vert j\delta_K\vert 
       <M} \int_{0}^{t}\int_{\R^+}\mathbf{1}_{y\leq ph_KG((j-i)h_K)N^{K,M}_j(s^-)}Q^p_j(ds,dy),
\end{aligned}
\end{equation}
 and $N^{K,M}_i(t)=0,$ for $i\in \Z,~\vert i\delta_K\vert\geq M.$ 
Note that for all $M>0,$ the process $N^{K,M}$ is well-defined because the initial population is finite and with finite second order moment and there is at most a finite number of jumps in the stochastic integrals above. This can be proved under \eqref{eq:initial linear growth2} by classical coupling arguments with a Yule Process as in the proof of Proposition 3.1. Moreover, $N^{K,M}_i(t)$ is non-decreasing in $M,$ so the limit when $M$ goes to infinity exists almost surely for all $i\in \Z$ and $t\in \R^+$. Hence the existence of $N^K,$ as increasing limit of $N^{K,M}.$ Moreover letting $M$ go to infinity in \eqref{eq:approximaated process} we deduce that $N^K$ is a solution to \eqref{eq:PRM representation}.\\
Let us now show that $N^K$ is finite and satisfies \eqref{eq:moments control}.
 We introduce the stopping time $\tau_M,$ defined for all $\theta>0,$ by 
   \begin{equation}
       \tau_\theta=\inf\{t>0,~\sum_{i\in \mathbb{Z}}e^{-C_A\vert ih_K\vert}N^K_i(t)\geq \theta\}.
   \end{equation}
   Let us prove that for any $T>0,$ we have 
   \begin{equation}
   \label{eq:stoping time}
       \mathbb{P}(\sup_{\theta\geq 1}\tau_\theta\geq T)= 1.
   \end{equation}
   Let $T>0.$ For all $t\in [0,\tau_\theta\wedge T],$ we have 
   \begin{align*}
       N^K_i(t\wedge \tau_\theta)\leq  N^K_i(0)+&\int_{0}^{t\wedge \tau_\theta}\int_{\mathbb{N}}\mathbf{1}_{y\leq b(i\delta_K)N^K_i(s^-)}Q^b_i(ds,dy)\\
       &+ \sum_{j\in \mathbb{Z}} \int_{0}^{t\wedge \tau_\theta}\int_{\mathbb{R}^+}\mathbf{1}_{y\leq ph_KG((j-i)h_K)N^K_j(s^-)}Q^p_j(ds,dy).
 \end{align*}
 Then, 
\begin{equation}
\label{eq: stopping time ineq}
\begin{aligned}
     \sup_{t\in [0,T\wedge \tau_\theta]}\sum_{i\in \mathbb{Z}}e^{-C_A\vert ih_K\vert} N^K_i(t)&\leq  \sum_{i\in \mathbb{Z}}e^{-C_A\vert ih_K\vert} N^K_i(0)+\sum_{i\in \mathbb{Z}}e^{-C_A\vert ih_K\vert} \int_{0}^{T\wedge \tau_\theta}\int_{\mathbb{R}^+}\mathbf{1}_{y\leq b(i\delta_K)N^K_i(s^-)}Q^b_i(ds,dy)\\
     &+\sum_{i\in\mathbb{Z}} \sum_{j\in \mathbb{Z}}e^{-C_A\vert ih_K\vert} \int_{0}^{T\wedge \tau_\theta}\int_{\mathbb{R}^+}\mathbf{1}_{y\leq ph_KG((j-i)h_K)N^K_j(s^-)}Q^p_j(ds,dy).
 \end{aligned}
 \end{equation}
 Furthermore for any $(t,i)\in \R^+\times \Z,$ we have 
 $$N^K_i(t\wedge \tau_\theta)\leq e^{C_A\vert ih_K\vert}\theta,$$
and
\begin{align*}
\sum_{j\in \Z}h_KG((j-i)h_K)N^K_j(t\wedge \tau_\theta)&= \sum_{j\in \Z}h_Ke^{C_A\vert jh_K\vert}G((j-i)h_K)e^{-C_A\vert jh_K\vert}N^K_j(t\wedge \tau_\theta)\\
&\leq  e^{C_A\vert ih_K\vert}\theta\sum_{j\in\Z}h_Ke^{C_A\vert lh_K\vert}G(lh_K).
\end{align*}
 Hence, the local martingale involved in the decomposition of $N^K_i$, is a martingale before $T\wedge\tau_\theta.$ Therefore, we take the expectation in \eqref{eq: stopping time ineq}, to obtain 
 \begin{align*}
     \mathbb{E}\big(\sup_{t\in [0,T\wedge \tau_\theta]}\sum_{i\in \mathbb{Z}}e^{-C_A\vert ih_K\vert} N^K_i(t) \big)&\leq \mathbb{E}\big(\sum_{i\in \mathbb{N}}e^{-C_A\vert ih_K\vert} N^K_i(0) \big)+\mathbb{E}\Big(\int_{0}^{T\wedge \tau_\theta} \sum_{i\in \mathbb{Z}}e^{-C_A\vert ih_K\vert} b(i\delta_K)N^K_i(s)\Big)\\
     &~~+\mathbb{E}\Big(\int_{0}^{T\wedge \tau_\theta}\sum_{i\in\mathbb{Z}} \sum_{j\in \mathbb{Z}} ph_KG((j-i)h_K)e^{-C_A\vert ih_K\vert }N^K_j(s)ds\Big)\\
     &\leq  \mathbb{E}\big(\sum_{i\in \mathbb{N}}e^{-C_A\vert ih_K\vert} N^K_i(0) \big) +\overline{b}  \mathbb{E}\Big( \int_{0}^{T\wedge \tau_\theta}\sum_{i\in \mathbb{Z}}e^{-C_A\vert ih_K\vert}  N^K_i(s)ds\Big)\\
&~~+p\mathbb{E}\Big(\int_{0}^{T\wedge \tau_\theta}\sum_{j\in\mathbb{Z}} \sum_{i\in \mathbb{Z}} h_Ke^{C_A\vert (j-i)h_K\vert}G((j-i)h_K)e^{-C_A\vert jh_K\vert }N^K_j(s)ds\Big) \\
     &\leq \mathbb{E}\big(\sum_{i\in \mathbb{Z}}e^{-C_A\vert ih_K\vert} N^K_i(0) \big)+C \int_{0}^{T} \mathbb{E}\Big( \sum_{i\in \mathbb{Z}}e^{-C_A\vert ih_K\vert}  N^K_i(s\wedge \tau_\theta)ds\Big). 
 \end{align*}
 Note that $T\mapsto \mathbb{E}\big(\sup_{t\in [0,T\wedge \tau_\theta]}\sum_{i\in \mathbb{Z}}e^{-C_A\vert ih_K\vert} N^K_i(t) \big) $ is measurable and bounded by $\theta.$ Thus,
 by Gronwall's inequality and \eqref{eq:initial linear growth2}, we obtain that for any $\theta>0$
 \begin{equation}
 \label{eq:gronwal form}
       \mathbb{E}\big(\sup_{t\in [0,T\wedge \tau_\theta]}\sum_{i\in \mathbb{Z}}e^{-C_A\vert ih_K\vert} N^K_i(t) \big)\leq e^{CT}\mathbb{E}\big(\sum_{i\in \mathbb{Z}}e^{-C_A\vert ih_K\vert} N^K_i(0) \big)\leq C(T,K).
 \end{equation}
By contradiction we assume that \eqref{eq:stoping time} fails, then there exists $T>0$ such that $ \mathbb{P}(\sup_{\theta\geq 1}\tau_\theta< T):=a_T>0.$ Therefore  $\mathbb{E}\big(\sup_{t\in [0,T\wedge \tau_\theta]}\sum_{i\in \mathbb{N}}e^{-C_A\vert ih_K\vert} N^K_i(t) \big)>a_T\theta\rightarrow  +\infty, \text{as}~~\theta\rightarrow +\infty.$ 
This is in contradiction with \eqref{eq:gronwal form}. Finally, by Fatou's Lemma and \eqref{eq:gronwal form}, we deduce that 
\begin{equation*}
    \mathbb{E}\big(\sup_{t\in [0,T]}\sum_{i\in \mathbb{Z}}e^{-C_A\vert ih_K\vert} N^K_i(t) \big)\leq C(T,K).
\end{equation*}
The proof of the second moment follows similar arguments. Let us give the main points. We introduce a new stopping time as   \begin{equation*}
       \tau'_\theta=\inf\{t>0,~\sum_{i\in \mathbb{Z}}e^{-C_A\vert ih_K\vert}(N^K_i(t))^2\geq \theta\}.
   \end{equation*}
The representation of $(N^K)^2$ is given as follows
\begin{equation}
\label{eq:eqqq}
    \begin{aligned}
      \big( N^K_i(t)\big)^2=&\big(N^K_i(0)\big)^2+\int_{0}^{t}\int_{\R^+} (2N^K_i(s^-)+1)\mathbf{1}_{y\leq b(i\delta_K)N^K_i(s^-)}Q^b_i(ds,dy)-\int_{0}^{t}\int_{\R^+}(2N^K_i(s^-)-1) \\
      &\mathbf{1}_{y\leq d(i\delta_K)N^K_i(s^-)}Q^d_i(ds,dy)
       + \sum_{j\in \mathbb{Z}} \int_{0}^{t}\int_{\mathbb{R}^+}(2N^K_i(s^-)+1)\mathbf{1}_{y\leq ph_KG((j-i)h_K)N^K_j(s^-)}Q^p_j(ds,dy).
    \end{aligned}
\end{equation}
The only point which differs from the previous proof is the fact that the local martingale appearing in the mutation term in \eqref{eq:eqqq} is a martingale before $\tau'_\theta.$ 
For all $(t,i)\in (0,+\infty)\times \Z,$ we have 
\begin{align*}
    \sum_{j\in \mathbb{Z}} N^K_i(t\wedge \tau'_\theta) h_KG((j-i)h_K)N^K_j(t\wedge \tau'_\theta)&\leq \sum_{j\in \mathbb{Z}} \frac{1}{2}((N^K_i(t\wedge \tau'_\theta))^2+(N^K_j(t\wedge \tau'_\theta))^2)h_KG((j-i)h_K) \\
    &\leq \frac{1}{2}\theta e^{C_A\vert ih_K\vert}\sum_{j\in \Z}h_K(1+e^{-\vert jh_K\vert})G(jh_K).
\end{align*}
Hence, we can take the expectation in \eqref{eq:eqqq}, the remainder of the proof is similar as above, using the fact that
$$\mathbb{E}\Big(\sum_{i\in \mathbb{N}}e^{-C_A\vert ih_K\vert} \big( N^K_i(0)\big)^2 \Big)\leq C(T,K),$$ which results from \eqref{eq:initial linear growth2} and \eqref{eq:initial control supcrtitical case}. That completes the proof of \eqref{eq:moments control}.
\end{proof}
\begin{pro}
Under Assumptions A-C, Equation \eqref{eq:mean dynamics system} has a unique solution $n^K\in C^1(\R^+,\widetilde{\ell}^1_A(\Z)),$
where \begin{equation}
\label{eq: new space}
\widetilde{\ell_A}^1(\Z):=\{v\in \R^\Z,~\sum_{i\in \Z}e^{-C_A\vert ih_K\vert}\vert v_i\vert<+\infty\},
\end{equation}
and $C_A:=2A+1,$ introduced in Proposition 4.1.
\end{pro}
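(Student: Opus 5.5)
We treat \eqref{eq:mean dynamics system} as a linear ODE $\frac{d}{dt}n=\mathcal{L}_K n$ in the Banach space $\widetilde{\ell}^1_A(\Z)$ with norm $\Vert v\Vert_A:=\sum_{i\in\Z}e^{-C_A\vert ih_K\vert}\vert v_i\vert$. Here
\[
(\mathcal{L}_Kv)_i=(b(i\delta_K)-d(i\delta_K))v_i+\sum_{l\in\Z}ph_KG(lh_K)v_{l+i}.
\]
The plan is to show that $\mathcal{L}_K$ is a bounded operator on this space, for each fixed $K$. Existence and uniqueness of a $C^1$ solution then follow from the Cauchy--Lipschitz theorem in Banach spaces, the solution being $n^K(t)=e^{t\mathcal{L}_K}n^K(0)$. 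Since $\mathcal{L}_K$ is bounded, this solution is in fact defined for all $t\geq0$, with no possible blow-up.

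First we check that the initial condition lies in the space. By Assumption C-\ref{item:9}, $n^K_i(0)=K^{u^K_i(0)}\leq K^{B}e^{A\log{K}\vert i\delta_K\vert}=K^Be^{A\vert ih_K\vert}$. Hence
\[
\Vert n^K(0)\Vert_A\leq K^B\sum_{i\in\Z}e^{-(A+1)\vert ih_K\vert}<\infty
\]
for each fixed $K$. The choice $C_A=2A+1$ leaves room to spare here.

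Second, we bound $\mathcal{L}_K$. The local part is bounded by $(\overline{b}+\overline{d})\Vert v\Vert_A$ by Assumption A-\ref{item:1}. For the mutation part we exchange sums (Tonelli) and use the triangle inequality $e^{-C_A\vert ih_K\vert}\leq e^{C_A\vert (j-i)h_K\vert}e^{-C_A\vert jh_K\vert}$, exactly as in the proof of Proposition 4.1:
\[
\sum_i e^{-C_A\vert ih_K\vert}\sum_j ph_KG((j-i)h_K)\vert v_j\vert\leq p\Big(\sum_{l\in\Z}h_KG(lh_K)e^{C_A\vert lh_K\vert}\Big)\Vert v\Vert_A .
\]
The constant in parentheses is finite, and bounded uniformly in large $K$ by roughly $2\int_\R G(x)e^{C_A\vert x\vert}dx$, thanks to Assumption A-\ref{item:2} and the Riemann sum argument used in Theorem 3.3. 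It is still worth making this uniform bound explicit for later use.

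The main obstacle is uniqueness. The statement asserts uniqueness only within $C^1(\R^+,\widetilde{\ell}^1_A)$, and in that class uniqueness is immediate from Gronwall applied to the difference of two solutions, using $\Vert\mathcal{L}_K\Vert$. The delicate points are elsewhere:
\begin{itemize}
\item One must check that the series defining $\mathcal{L}_Kv$ converges absolutely for every component $i$. This follows from the weighted bound above.
\item One must check that $n^K$ coincides with $\Esp(N^K)$. For this we would use Proposition 4.1: the moment bound \eqref{eq:moments control} makes the martingale in \eqref{eq:s-m} a true martingale. Taking expectations then shows that $t\mapsto\Esp(N^K(t))$ is a solution in the weighted space, so by uniqueness it equals $n^K$.
\end{itemize}
Finally, positivity and the bound $n^K_i\geq K^a$ of \eqref{eq:large dynamics} follow from the representation $e^{t\mathcal{L}_K}$. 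Indeed, the off-diagonal part of $\mathcal{L}_K$ is nonnegative, so $e^{t\mathcal{L}_K}$ preserves positivity, and $b\geq d$ gives the lower bound.
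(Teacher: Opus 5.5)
Your proposal is correct and follows essentially the paper's route. The paper runs a Picard/contraction argument in $C([0,T],\widetilde{\ell}^1_A(\Z))$ for small time and then iterates, and the ``simple computations'' it leaves implicit are exactly your weighted operator bound $\Vert\mathcal{L}_K v\Vert_A\leq\big(\overline{b}+\overline{d}+p\sum_{l\in\Z}h_KG(lh_K)e^{C_A\vert lh_K\vert}\big)\Vert v\Vert_A$, together with $\Vert n^K(0)\Vert_A<\infty$ from Assumption C-\ref{item:9}. Noting that $\mathcal{L}_K$ is bounded lets you skip the small-time iteration and write the global solution directly as $e^{t\mathcal{L}_K}n^K(0)$; your remarks on positivity and on identifying $n^K$ with $\Esp(N^K)$ are sound but not needed for the statement.
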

See Appendix B for the proof.\\
The next proposition shows the convergence of the Hopf-Cole transformation $u^K$ to the viscosity solution of the Hamilton-Jacobi equation, under Assumptions A-C.

\begin{pro}
    Under Assumptions A-C, $(\widetilde{u}^K)_K$ converges locally uniformly to the unique Lipschitz-continuous viscosity solution of the Hamilton-Jacobi equation:
\begin{equation}
\label{eq:p-HJ}
    \begin{cases}
        \partial_t u(t,x)=b(x)-d(x)+p\int_{\R}G(y)e^{\nabla u(t,x).y}dy,~~(t,x)\in(0,+\infty)\times \R,\\
        u(0,.)=u^0(.).
    \end{cases}
\end{equation}
\end{pro}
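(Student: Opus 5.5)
We follow the scheme of Theorem 3.3: obtain locally uniform bounds and Lipschitz estimates in time and space for $u^K$, apply Arzelà–Ascoli, identify the limit as a viscosity solution, and conclude by uniqueness. The novelty with respect to Theorem 3.3 is that the initial data now satisfy the growth condition C-\ref{item:9} instead of the affine decay B-\ref{item:6}. The a priori estimates of \cite{J} must therefore be redone in the weighted space $\widetilde{\ell}^1_A(\Z)$ of Proposition 4.2, where $n^K$ is well defined.

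\textbf{Step 1: a priori bounds.} The lower bound is immediate. Since $b\ge d$ by C-\ref{item:11} and the mutation term is nonnegative, \eqref{eq:large dynamics} gives $u^K_i(t)\ge a>0$. For the upper bound, construct an explicit supersolution of \eqref{eq:mean dynamics system} of the form $\bar n_i(t\log K)=K^{A'\sqrt{1+(i\delta_K)^2}+B+Ct}$ with $A'\ge A$. Its discrete gradient is bounded by $A'$, so
\[
\sum_l ph_KG(lh_K)\,\frac{\bar n_{l+i}}{\bar n_i}\le p\sum_l h_KG(lh_K)e^{A'|lh_K|}\le 2p\int G(y)e^{A'|y|}\,dy
\]
for large $K$, by A-\ref{item:2}. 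Choosing $C\ge \overline b+2p\int Ge^{A'|y|}$ makes $\bar n$ a supersolution. A comparison principle in $\widetilde{\ell}^1_A$ (linear system with nonnegative off-diagonal coefficients, Gronwall on the weighted $\ell^1$ norm of $(n^K-\bar n)^+$, as in Appendix B) then yields $u^K_i(t)\le A'|i\delta_K|+B+Ct$. This gives a local uniform bound.

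\textbf{Step 2: spatial Lipschitz estimate.} Since $p$ is constant, differentiate along shifts. Set $w_i=u^K_{i+1}-u^K_i$, or equivalently compare $n^K_{i+1}$ with $K^{L'\delta_K}n^K_i$. Both $v_i=n^K_{i+1}$ and $K^{\pm L(t)\delta_K}n^K_i$ solve the same linear system up to the error $|b((i+1)\delta_K)-b(i\delta_K)|+|d(\cdot)-d(\cdot)|\le \|b-d\|_{Lip}\delta_K$. Choosing $L(t)=L+\|b-d\|_{Lip}t$ at the $t\log K$ scale, comparison gives $|u^K_{i+1}(t)-u^K_i(t)|\le (L+Ct)\delta_K$ globally in $i$. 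Here A-\ref{item:3} provides the initial Lipschitz constant, and the constancy of $p$ avoids the extra term appearing in \eqref{eq: spatial lischitz estimates}.

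\textbf{Step 3: time Lipschitz estimate and passage to the limit.} Plugging the Step 2 bound into \eqref{eq:system of ODE} gives
\[
|\partial_t u^K_i|\le \overline b+\overline d+2p\int G(y)e^{C_{Lip}(T)|y|}\,dy .
\]
Arzelà–Ascoli then yields a subsequence with $\widetilde u^K\to u$ locally uniformly, with $u$ Lipschitz. The viscosity property follows by the standard perturbed test-function argument at the discrete level: at a strict local max of $\widetilde u^K-\phi$, the nonlocal exponential term is controlled by the Taylor expansion of $\phi$, with the tails of the sum handled by the super-exponential decay of $G$ together with the global Lipschitz bound. The limit of the Riemann sum $\sum_l h_KG(lh_K)e^{\nabla\phi\cdot lh_K}$ is $\int Ge^{\nabla\phi\cdot y}$ since $h_K\to0$. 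The initial condition is given by A-\ref{item:4}.

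\textbf{Main obstacle: uniqueness.} Uniqueness must hold in the class of Lipschitz viscosity solutions with at most linear growth, rather than under the sublinear-decay setting of \cite{BMP}. Because $u$ is globally Lipschitz in $x$ on $[0,T]$, only the Hamiltonian $H(x,q)=b-d+p\int Ge^{qy}$ restricted to $|q|\le C_{Lip}(T)$ matters. On that set $H$ is Lipschitz in $q$ and Lipschitz in $x$, so the usual doubling-of-variables proof with a penalization $\varepsilon(1+|x|^2)^{1/2}$, which is allowed by the linear growth, gives comparison. I expect this comparison step, together with making the discrete comparison principle rigorous in $\widetilde{\ell}^1_A$ for the supersolution in Step 1, to be the delicate points.
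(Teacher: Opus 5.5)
Your proposal is correct and follows the same route as the paper's proof. The paper uses an upper bound $u^K_i(t)\le A\vert i\delta_K\vert+B+Ct$ from a comparison argument (adapting Proposition 3.4 of \cite{J}), the lower bound $u^K_i\ge a$ from \eqref{eq:large dynamics}, a global spatial Lipschitz bound (quoted from Proposition 4.2 of \cite{J}), the time Lipschitz bound as in Theorem 3.3, Arzel\`a--Ascoli with the standard viscosity stability argument, and uniqueness for linearly growing data (cited from \cite{G}); you simply make explicit, via the supersolution $K^{A'\sqrt{1+(i\delta_K)^2}+B+Ct}$, the shift comparison in $\widetilde{\ell}^1_A(\Z)$ and the restricted-Hamiltonian comparison, the steps that the paper delegates to these references.
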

See Appendix B for the proof.
\subsection{Convergence result in the supercritical regime}
\begin{thm}
Under Assumptions A-C, the sequence of stochastic processes $(\widetilde{\beta}^K)_K$ converges in probability locally uniformly to the unique Lipschitz-continuous viscosity solution of the equation 
\begin{equation*}
    \begin{cases}
        \partial_t u(t,x)=b(x)-d(x)+p\int_{\R}G(y)e^{\nabla u(t,x).y}dy,~~(t,x)\in(0,+\infty)\times \R,\\
        u(0,.)=u^0(.).
    \end{cases}
\end{equation*}
\end{thm}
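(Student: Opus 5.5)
The plan mirrors the proof of Theorem 3.2, now with a uniform-in-time variance control. We compare $\beta^K$ with $u^K$ and then invoke Proposition 4.3, which gives $\widetilde{u}^K\to u$ locally uniformly.

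\textbf{Step 1: variance control.} Set $Y^K_i(t)=\mathbb{E}\big((N^K_i(t)-n^K_i(t))^2/n^K_i(t)^2\big)$; this is the normalized variance itself, not multiplied by $n^K_i$. Proposition 4.1 and the weighted second moments \eqref{eq:moments control} justify It\^o's formula and make the local martingales true martingales, since they are localized through $\tau'_\theta$ and the weight $e^{-C_A|ih_K|}$. The same computation as in Proposition 3.4, using Young's inequality on the cross terms, should give
\begin{equation*}
\frac{d}{dt}Y^K_i\le p\sum_{l\in\Z}h_KG(lh_K)\frac{n^K_{l+i}}{n^K_i}\big(Y^K_{l+i}-Y^K_i\big)+\frac{1}{n^K_i}\Big(b(i\delta_K)+d(i\delta_K)+p\sum_{l}h_KG(lh_K)\frac{n^K_{l+i}}{n^K_i}\Big).
\end{equation*}
The key difference from the subcritical case is that dividing by $(n^K_i)^2$ removes the zeroth-order term $(b-d+p\cdots)Y$. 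Supercriticality \eqref{eq:super-critical case} makes the growth of $n_i^K$ dominate, so no positive zeroth-order coefficient survives. The source term is bounded by $C(T)K^{-a}$, using \eqref{eq:large dynamics} together with the spatial Lipschitz bound on $u^K$ from Proposition 4.3 (its appendix proof) to control $n^K_{l+i}/n^K_i\le e^{C_{Lip}(T)|lh_K|}$.

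The maximum principle of Appendix A, adapted to the weighted space $\widetilde{\ell}^1_A$, together with \eqref{eq:initial control supcrtitical case}, then gives
\begin{equation*}
\sup_i Y^K_i(t\log K)\le CK^{-a}+C(T)K^{-a}\log K\qquad\text{for } t\in[0,T].
\end{equation*}

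\textbf{Step 2: from variance to exponents.} Chebyshev's inequality and the same logarithm splitting as in Section 3.6 give, for each $i$ and $t$,
\begin{equation*}
\mathbb{P}\big(|\beta^K_i(t)-u^K_i(t)|>\eta\big)\le C(\eta)K^{-a}\log K.
\end{equation*}
A union bound over the $O(1/\delta_K)$ sites with $i\delta_K\in[-R,R]$ is then summable by \eqref{eq:discret cond}. This is so because $1/\delta_K\ll K^{\varepsilon}$ for any $\varepsilon<a$.

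\textbf{Step 3: uniformity in time (main obstacle).} For local uniform convergence in time, I would take a grid $t_k=k\,K^{-\varepsilon}$ in $[0,T]$ and control the oscillation of $\beta^K_i$ between grid points. Over a real-time interval of length $K^{-\varepsilon}\log K$, the process $N^K_i$ cannot drop much. Deaths occur at rate at most $\bar d N^K_i$, so a martingale bound on $\sup_{s\in[t_k,t_{k+1}]}|N^K_i(s\log K)-N^K_i(t_k\log K)|$ should apply. This bound uses Doob's inequality, the quadratic variation $\langle M^K_i\rangle$, and the fact that $N^K_i\ge K^{a}/2$ with high probability. It shows the relative change stays $O(K^{-\varepsilon}\log K)+o_{\mathbb{P}}(1)$, hence the oscillation of $\beta^K_i$ is $o(1)$.

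The delicate points in this step are twofold. The union bound now runs over $O(K^{\varepsilon}/\delta_K)$ events, so $\varepsilon$ must be taken smaller than $a/2$. The variance estimates must also be made uniform in $i$ through the weighted moments.

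\textbf{Conclusion.} Linear interpolation preserves the sup bound, so $\sup_{[0,T]\times[-R,R]}|\widetilde\beta^K-\widetilde u^K|\to0$ in probability. Combined with Proposition 4.3 this gives the theorem.
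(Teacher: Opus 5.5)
Your Steps 1 and 2 match the first part of Proposition 4.5 and the logarithm splitting in the paper's proof of Theorem 4.4: the same $Y^K_i$, the same differential inequality and maximum principle, and the same Chebyshev bound with a union bound over the $O(1/\delta_K)$ sites. Your bound $CK^{-a}+C(T)K^{-a}\log K$ is in fact sharper than the stated $C(T)K^{-a/2}$. One conceptual correction: the zeroth-order term does not vanish because of supercriticality. It cancels identically in the It\^o computation for $S^K_i=(N^K_i-n^K_i)/n^K_i$, because $n^K$ solves the linear mean equation. The condition $b\ge d$ enters only through $n^K_i\ge K^a$ in the source term.

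The genuine difference is Step 3, and there your route works but one step needs more than you indicate.

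\textbf{The paper's route.} The paper does not discretize time. It writes the It\^o decomposition of $(S^K_i)^2$ and applies Doob's maximal inequality to its martingale part, $\mathbb{P}(\sup_t|\widetilde M^K_i(t)|>\eta)\le \eta^{-1}\mathbb{E}|\widetilde M^K_i(T)|$. It bounds $\mathbb{E}|\widetilde M^K_i(T)|$ by the expectations of the other terms, which are already small by Step 1, and treats the finite-variation terms by Markov. This gives a bound of order $K^{-a/2}/\eta$ on $\mathbb{P}(\sup_{t\le T}S^K_i(t\log K)^2>\eta)$ for each site, so the only union bound is over sites.

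\textbf{What your oscillation estimate needs.} It requires more than the death rate and Doob for $M^K_i$. The drift of $N^K_i$ contains the mutation influx $\sum_l ph_KG(lh_K)N^K_{l+i}$, which depends on the random sizes at all other sites. Markov's inequality on the raw influx gives only $O(K^{-\varepsilon}\log K)$ per event, which does not survive the union bound over $K^{\varepsilon}/\delta_K$ events. Instead, write $N^K_j=n^K_j(1+S^K_j)$ and set $\Delta=K^{-\varepsilon}\log K$. Then:
\begin{itemize}
\item the deterministic part gives the relative increment $K^{u^K_i(t_{k+1})-u^K_i(t_k)}-1=O(K^{-\varepsilon}\log K)$, by the global time-Lipschitz bound on $u^K$;
\item the fluctuating part is controlled in $L^2$ by Cauchy--Schwarz and the uniform-in-$j$ bound on $Y^K_j$ from Step 1, giving $O(\Delta^2K^{-a}\log K)$;
\item the martingale part is $O(\Delta K^{-a})$ by Doob's $L^2$ inequality, since $\langle M^K_i\rangle$ increments are of order $\Delta\, n^K_i$.
\end{itemize}
With these bounds your route closes for $\varepsilon<a$.

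\textbf{Comparison.} Your route yields an explicit time modulus for $\beta^K$, at the price of an extra union bound over grid points. Applying the same ingredients directly to $S^K_i$ over the whole interval $[0,T\log K]$, as the paper does, makes the grid unnecessary.
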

This theorem  gives a probabilistic derivation of the classical Hamilton-Jacobi equation in a uniformly supercritical dynamics. 
\begin{pro}
   Let $T>0.$ For any $(t,i)\in[0,T]\times \Z,$ we have 
    \begin{equation}
        \label{eq:uniform control}
    \Esp\left(\frac{N^K_i(t\log{K})-n^K_i(t\log{K})}{n^K_i(t\log{K})}\right)^2 \leq \frac{C(T)}{K^{a/2}},
    \end{equation}
    where $ C(T)$ is a positive constant increasing on $T,$ but independent on $K.$\\
    Moreover, for all $\eta>0$ and $T,D>0,$ we have 
    \begin{equation}
    \label{eq:proba convergence ineq}
      \mathbb{P}\Bigg(\sup_{\{(t,i)\in[0,T]\times \Z/ i\delta_K\in [-D,D]\}} \Big\vert \frac{N^K_i(t\log{K})-n^K_i(t\log{K})}{n^K_i(t\log{K})}\Big\vert >\eta\Bigg)\leq\frac{C(T,D)}{\eta^2 \delta_KK^{a/2}},
\end{equation}
    where $C(T,D)$ is a positive constant depending on $T,D$ and increasing on $T,$ but independent on $K.$ Hence, on a logarithmic time scale, $N^K/n^K$ converges in probability to
$1$ locally uniformly in time and space.
\end{pro}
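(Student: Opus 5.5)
We mimic the proof of Proposition 3.4, but now with the squared relative error normalized differently. Set
\[
R^K_i(t)=\frac{(N^K_i(t)-n^K_i(t))^2}{(n^K_i(t))^2},\qquad Y^K_i(t)=\Esp(R^K_i(t)).
\]
These quantities are well defined thanks to Proposition 4.1 and the bound $n^K_i\geq K^a$ from \eqref{eq:large dynamics}.

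\textbf{Step 1: a differential inequality for $Y^K$.} Apply Itô's formula to $R^K_i$, using the semimartingale decomposition \eqref{eq:s-m}. The growth terms $(b-d)$ cancel exactly, because $n^K$ solves the same linear system \eqref{eq:mean dynamics system}. Write $X_i=N^K_i-n^K_i$ and $w_{l}=ph_KG(lh_K)n^K_{l+i}/n^K_i$. Young's inequality gives
\[
2\frac{X_iX_{l+i}}{(n^K_i)^2}\leq \frac{X_i^2}{(n^K_i)^2}+\frac{X_{l+i}^2}{(n^K_{l+i})^2}\Big(\frac{n^K_{l+i}}{n^K_i}\Big)^2 .
\]
After weighting, the mutation contribution is therefore bounded by $\sum_l w_l(R_{l+i}-R_i)$. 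The quadratic-variation term contributes
\[
\frac{1}{(n^K_i)^2}\frac{d\Esp\langle M^K_i\rangle}{dt}=\frac{1}{n^K_i}\Big(b+d+\sum_l w_l\Big).
\]
By the global spatial Lipschitz bound on $u^K$ (Appendix B, analogous to \eqref{eq: spatial lischitz estimates}) together with Assumption A-\ref{item:2}, we have $\sum_l w_l\le C(T/\log K)$ on the time interval $[0,T\log K]$. Combined with $n^K_i\geq K^a$, the forcing term is at most $C(T)K^{-a}$. The outcome is
\[
\frac{d}{dt}Y^K_i\leq \sum_l w_l\,(Y^K_{l+i}-Y^K_i)+\frac{C(T)}{K^a}.
\]

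\textbf{Step 2: maximum principle.} This inequality has no positive zeroth-order coefficient. This is the supercritical analogue of dividing by $n^2$ rather than by $n$. The maximum principle of Appendix A then gives
\[
Y^K_i(t\log K)\leq \sup_j Y^K_j(0)+C(T)K^{-a}\,t\log K\leq \frac{C}{K^a}+\frac{C(T)\log K}{K^a}\leq \frac{C(T)}{K^{a/2}},
\]
where the first term comes from Assumption C-\ref{item:12}. This is \eqref{eq:uniform control}.

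The main obstacle is justifying the maximum principle on an infinite lattice for functions $Y^K$ that are not a priori bounded. One has to check that $\sup_i Y^K_i(t)<\infty$, or at least a growth bound weighted by $e^{-C_A|ih_K|}$, using Proposition 4.1 and $n^K\geq K^a$. One also needs the weights $w_l$ to be summable uniformly, which is provided by the global Lipschitz bound. If the unweighted supremum fails to be finite, I would instead run the maximum principle on $e^{-\epsilon|ih_K|}Y^K_i$ and let $\epsilon\to0$.

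\textbf{Step 3: uniform in time and space.} For the sup over $t\in[0,T]$, a pointwise Chebyshev bound is not enough. I would use Doob's inequality on the martingale part. Write
\[
\frac{N^K_i(s)}{n^K_i(s)}-1=\int_0^s \frac{dM^K_i}{n^K_i}+\int_0^s\sum_l w_l\Big(\frac{N^K_{l+i}}{n^K_{l+i}}-\frac{N^K_i}{n^K_i}\Big)d\tau ,
\]
which follows from \eqref{eq:s-m} after cancellation of the growth terms. Doob's inequality bounds the martingale term in $L^2$ by $C(T)\log K/K^a$. The drift term is controlled in $L^2$ via Cauchy--Schwarz and Step 1, giving $C(T)(\log K)^2K^{-a/2}$; this is harmless up to adjusting $a/2$ to a slightly smaller exponent, or else one absorbs it by redoing Step 1 with $\sup_t$ inside. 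This yields $\Esp\sup_{t\le T}|\cdot|^2\leq C(T)K^{-a/2}$ for each $i$. A union bound over the $O(D/\delta_K)$ indices with $i\delta_K\in[-D,D]$, followed by Chebyshev, gives \eqref{eq:proba convergence ineq}. Since \eqref{eq:discret cond} gives $\delta_K\gg K^{-\varepsilon}$, the right-hand side tends to $0$, which proves the convergence in probability.
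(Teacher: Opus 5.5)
Your proposal is correct and is essentially the paper's proof: It\^o's formula for $R^K_i=(S^K_i)^2$ with $S^K_i=(N^K_i-n^K_i)/n^K_i$, Young's inequality, the weighted maximum principle of Appendix A (justified by the growth bound coming from Proposition 4.1 and $n^K\geq K^a$), then a maximal inequality and a union bound over the $O(D/\delta_K)$ indices. The only variation is that you apply Doob's $L^2$ inequality to $\int dM^K_i/n^K_i$ in the linear equation for $S^K_i$, whereas the paper applies the weak-type inequality to the martingale $\widetilde M^K_i$ in the equation for $(S^K_i)^2$. Fix two slips: the Young inequality you display amounts to $2\rho S^K_iS^K_{l+i}\leq R^K_i+\rho^2R^K_{l+i}$ with $\rho=n^K_{l+i}/n^K_i$, which does not yield $\sum_l w_l(R^K_{l+i}-R^K_i)$---use $2S^K_iS^K_{l+i}\leq R^K_i+R^K_{l+i}$ and multiply by $w_l$---and your Step 3 identity omits the initial term $S^K_i(0)$. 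Note also that feeding $Y^K\leq C(T)\log K/K^a$ into the drift estimate gives $O((\log K)^3K^{-a})$, so the exponent $a/2$ need not be lowered. Finally, no $\epsilon\to0$ limit is needed in the maximum principle, since any positive weight $e^{-\lambda|ih_K|-Dt}$ with $\lambda$ above the growth rate of $Y^K$ suffices.
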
   
\begin{proof}
    Let $T>0.$ For all $(t,i)\in [0,T]\times\Z,$ we define $S^K_i(t)=\frac{N^K_i(t)-n^K_i(t)}{n^K_i(t)}.$ Using Itô's lemma and the semi-martingale decomposition  \eqref{eq:s-m}, which is well-defined by \eqref{eq:moments control}, we have for all $0<s<t$  
\begin{align*} 
S^K_i(t)=S^K_i(s)+\int_{s}^{t}\sum_{l\in \Z}p h_KG(lh_K)\frac{n^K_{l+i}(\tau)}{n^K_i(\tau)} (S^K_{l+i}(\tau)-S^K_i(\tau))d\tau +\int_{s}^{t}\frac{dM^K_i(\tau)}{n^K_i(\tau)}.
\end{align*}
    Then 
    \begin{equation}
    \label{eq:martingal form}
    \begin{aligned}
  \big(S^K_i(t)\big)^2=\big(S^K_i(s)\big)^2+  \int_{s}^{t}\sum_{l\in \Z}ph_KG(lh_K)\frac{n^K_{l+i}(\tau)}{n^K_i(\tau)} 2S^K_i(\tau) (S^K_{l+i}(\tau)-S^K_i(\tau))ds&+\int_{s}^{t}\frac{d \langle M^K_i\rangle_\tau}{\big(n^K_i(\tau)\big)^2}\\
  +\widetilde{M}^K_i(t)-\widetilde{M}^K_i(s),
  \end{aligned}
    \end{equation}
    where $t\mapsto \widetilde{M}^K_i$ is a martingale, which is a consequence of \eqref{eq:moments control}. We define, $Y^K_i(t)=\Esp\Big(\big (S^K_i(t)\big)^2\Big),$ which is also well-defined by \eqref{eq:moments control}. By Young's inequality, we obtain that
    \begin{align*}
   Y^K_i(t) \leq Y^K_i(s)+  \int_{s}^{t}\sum_{l\in \Z}ph_KG(lh_K)\frac{n^K_{l+i}(\tau)}{n^K_i(\tau)}(Y^K_{l+i}(\tau)-Y^K_i(\tau))d\tau+\int_{s}^{t}\frac{d \Esp( \langle M^K_i\rangle_\tau)}{\big(n^K_i(\tau)\big)^2}.
    \end{align*}
Furthermore, from \eqref{eq: spatial lischitz estimates} the spatial Lipschitz bound of $u^K,$ is global on $[0,T]\times Z,$ and $T\mapsto C_{Lip}(T)$ is increasing. We obtain
\begin{align*}
    \int_{s}^{t}\frac{d \Esp( \langle M^K_i\rangle_\tau)}{\big(n^K_i(\tau)\big)^2}&=\int_{s}^{t}\frac{1}{n^K_i(\tau)}\Big (b(i\delta_K)+d(i\delta_K)+\sum_{l\in \Z}p h_KG(lh_K)\frac{n^K_{l+i}(\tau)}{n^K_i(\tau)} d\tau\Big)\\
    &\leq \int_{s}^{t}\frac{C(T/\log{K})d\tau}{n^K_i(\tau)},
\end{align*}
where $T\mapsto C(T)$ is increasing. 
Then by \eqref{eq:large dynamics}, we deduce that 
\begin{equation}
\label{eq:maximum point form}
    \frac{d Y^K_i(t)}{dt} \leq \sum_{l\in \Z} ph_KG(lh_K)\frac{n^K_{l+i}(t)}{n^K_i(t)}(Y^K_{l+i}(t)-Y^K_i(t))+\frac{C(T/\log{K})}{K^a}.
\end{equation}
Therefore, by the maximum principle (see Appendix B for the proof), and \eqref{eq:initial control supcrtitical case}, we deduce \eqref{eq:uniform control}.
Moreover, let $\eta>0,$ we have by the submartingale inequality and from \eqref{eq:martingal form}, that for all $i\in \Z$
\begin{equation*}
    \begin{aligned}
    \mathbb{P}(\sup_{t\in[0,T]}\vert \widetilde{M}^K_i(t)\vert>\eta)\leq &\frac{1}{\eta}\mathbb{E}(\vert \widetilde{M}^K_i(T)\vert)\\
    &\leq \frac{1}{\eta}\Big( \mathbb{E}\big((S^K_i(T))^2\big)+\mathbb{E}\big((S^K_i(0))^2\big) \\
    &~+\int_{0}^{T}\sum_{l\in \Z}ph_KG(lh_K)\frac{n^K_{l+i}(s)}{n^K_i(s)} 2 \Esp\big(\vert S^K_i(s) (S^K_{l+i}(s)-S^K_i(s))\vert\Big) ds\\
    &~+\int_{0}^{T}\frac{d \Esp( \langle M^K_i\rangle_s)}{\big(n^K_i(s)\big)^2}\Big).
\end{aligned}
\end{equation*}
Moreover, we have 
\begin{equation}
\begin{aligned}
    \sum_{l\in \Z}ph_KG(lh_K)&\frac{n^K_{l+i}(s)}{n^K_i(s)}\Esp\Big(2\vert S^K_i(s) (S^K_{l+i}(s)-S^K_i(s))\vert\Big)\\
    &\leq p\sum_{l\in\Z} h_KG(lh_K)e^{C_{Lip}(T/\log{K})\vert lh_K\vert} (3Y^K_i(s)+Y^K_{l+i}(s)).
    \end{aligned}
\end{equation}
Thus, from \eqref{eq:uniform control} we deduce that 
\begin{equation}
\label{eq:martingal control}
    \mathbb{P}(\sup_{t\in[0,T]}\vert \widetilde{M}^K_i(t)\vert>\eta)\leq \frac{C(T/\log{K})}{\eta K^{a/2}}.
\end{equation}
Therefore, from \eqref{eq:martingal form} and \eqref{eq:martingal control}, we obtain for $K$  large enough that
\begin{equation}
    \begin{aligned}
        \mathbb{P}(\sup_{t\in[0,T]} (S^K_i(t))^2>\eta)&\leq   \mathbb{P}\big( (S^K_i(0))^2>\eta/4\big)+\mathbb{P}\big(\sup_{t\in[0,T]}\vert \widetilde{M}^K_i(t)\vert>\eta/4\big) \\
        ~&+\mathbb{P}\Big( \int_{0}^{T}\sum_{l\in \Z}ph_KG(lh_K)\frac{n^K_{l+i}(s)}{n^K_i(s)} 2\vert S^K_i(s) (S^K_{l+i}(s)-S^K_i(s))\vert ds>\eta/4\Big)\\
        &\leq \frac{C(T/\log{K})}{\eta K^{a/2}}.
    \end{aligned}
\end{equation}
This implies \eqref{eq:proba convergence ineq}, and hence  $N^K/n^K$ converges in probability to $1,$ on a logarithmic time scale. We conclude the proof of Proposition 4.5.

\end{proof}
\subsection{Proof of Theorem 4.4}

Let us prove the convergence toward the Hamilton-Jacobi equation. By similar computations as in the proof of Theorem 3.1, and by  \eqref{eq:proba convergence ineq}, we deduce that
\begin{align*}
    &\mathbb{P}\Bigg( \sup_{\{(t,x)\in [0,T]\times [-D,D]\}}\big \vert \widetilde{\beta}^K(t,x)-\widetilde{u}^K(t,x)\big\vert>\eta\Big)\\
    &\leq \mathbb{P}\Bigg( \sup_{\{(t,i)\in [0,T]\times \Z/ i\delta_K\in[-2D,2D]\}}\big \vert \beta^K_i(t)-u^K_i(t)\big\vert>\eta\Big)\\
    &=\mathbb{P}\Bigg( \sup_{(t,i\delta_K)\in [0,T]\times [-2D,2D]} \frac{1}{\log{K}}\Big\vert \log{\Big(1+\frac{N^K_i(t\log{K})-n^K_i(t\log{K})}{n^K_i(t\log{K})}\Big)}\Big\vert>\eta\Big)\\
    &\leq (\frac{1}{\eta^2\log^2{K}}+4)\frac{C(T,2D)}{\delta_K K^{a/2}}.
\end{align*}
Moreover, from Proposition 4.3, $u^K$ converges to the viscosity solution of \eqref{eq:p-HJ}, we conclude the proof of Theorem 4.4.
\newpage 
\section*{Appendix A}
\textbf{Proof of the maximum principle for \eqref{eq:maximum principle form1}:} Recall that $K$ is fixed in this proof.\\
Let $T>0.$ We recall that $ Y^K_i(t)=\mathbb{E}\Big(\frac{(N^K_i(t)-n^K_i(t))^2}{n^K_i(t)}\Big).$ We start by bounding the growth of $Y^K_i$ when $\vert i\vert \rightarrow +\infty.$
We have, $$Y^K_i(t)\leq 2 \frac{\mathbb{E}\big((N^K_i(t))^2\big)}{n^K_i(t)}+ 2n^K_i(t).$$ 
From \eqref{eq:second order moment1}, we have
$$\mathbb{E}\big((N^K_i(t))^2\big)\leq C(T,K),\text{and}~~n^K_i(t)\leq \sqrt{C(T,K)},~~\forall t\in[0,T].$$
Furthermore, from \eqref{eq:local bounds} we have that
$n^K_i(t)\geq e^{-L\vert ih_K\vert+A_4\log{K}+A_5t}.$ Therefore, we deduce that
\begin{equation}
\label{eq:behavior at infinity}
    Y^K_i(t)\leq \widetilde{C}(T,K)(e^{L\vert ih_K\vert }+1).
\end{equation}
Our goal is to deduce from \eqref{eq:maximum principle form1} that 
\begin{equation}
\label{eq:pm}
    (Y^K_i(t)-C(T/\log{K})t)e^{-(\alpha +o(1))t} -C\leq 0,
    \end{equation}
where $C$ is given by \eqref{eq:uniform initial control1}.
To this end, let us define for all $(t,i)\in[0,T]\times\mathbb{Z},$ and for some positive constant $D$ to be chosen latter  $$\widetilde{Y}^K_i(t):=\Big(\big(Y^K_i(t)-C(T/\log{K})t\big)e^{-(\alpha +o(1))t}-C\Big) e^{-(L+1)\vert lh_K\vert}e^{-Dt}.$$
By contradiction, we assume that 
\begin{equation}
    M=\sup_{(t,i)\in [0,T]\times\mathbb{Z}}\widetilde{Y}^K_i(t)>0.
\end{equation}
From \eqref{eq:behavior at infinity}, $\widetilde{Y}^K_i(t)$ vanishes as $\vert i\vert \rightarrow+\infty.$ Let 
$(t_K,i_K)$ be a maximum point of $\widetilde{Y^K}$ in $[0,T]\times \mathbb{Z}.$ By \eqref{eq:initial control supcrtitical case} we have that $t_K>0.$ Thus, it follows from \eqref{eq:maximum principle form1}  at $(t_K,i_K)$ that
\begin{align*}
    \frac{d}{dt}\widetilde{Y}^K_{i_{K}}(t_{K})&\leq -D\widetilde{Y}^K_{i_K}(t_K)+p\sum_{l\in \mathbb{Z}}h_KG(lh_K)\frac{n^K_{l+i_K}(t_K)}{n^K_{i_K}(t_K)}(e^{(L+1)(\vert (l+i_K)h_K\vert -\vert i_K h_K\vert)}\widetilde{Y}^K_{l+i_K}(t_K)-\widetilde{Y}^K_{i_K}(t_K))\\
    &\leq-DM +p\sum_{l\in \mathbb{Z}}h_KG(lh_K)\frac{n^K_{l+i_K}(t_K)}{n^K_{i_K}(t_K)}(e^{(L+1)\vert lh_K\vert} M-M)\\
    &\leq \Bigg(-D+p \sum_{l\in \mathbb{Z}}h_KG(lh_K)(e^{(L+1+C_{Lip}(T/\log{K}))\vert lh_K\vert}\Bigg)M.
\end{align*}
Then, for $D>p \sum_{l\in \mathbb{Z}}h_KG(lh_K)(e^{(L+1+C_{Lip}(T/\log{K}))\vert lh_K\vert},$ we have 
$$\frac{d}{dt}\widetilde{Y}^K_{i_K}(t_{K})<0.$$ This is in contradiction with the fact that $t_K>0.$ Hence \eqref{eq:pm} is proved. \\
\subsection*{Appendix B}
\textbf{Proof of Proposition 4.2}
The proof follows the classical Picard iteration method. The novelty here is that the initial condition is not integrable. We introduce a new functional space in which existence and uniqueness hold. For more details about the existence and uniqueness of such a discrete system, we refer to Appendix A in \cite{J}.\\
We define  $B^K=(a^K_{i,j})_{i,j\in \mathbb{Z}}$ the infinite real matrix where 
$a^K_{i,j}=ph_K G((j-i)h_K),$ $D^K$ the infinite diagonal matrix whose diagonal elements are $b(i\delta_K)-d(i\delta_K)$ for all $i\in \mathbb{Z},$ and we define the infinite vector  $n^K=(n^K_i)_{i\in \mathbb{Z}}.$ We write Equation \eqref{eq:mean dynamics system} as
\begin{equation}
    \frac{d}{dt}n^K(t)=(D^K+B^K)n^K(t).
\end{equation} 
Let $T>0.$ We consider the following closed subset of $C( [0,T],\widetilde{\ell}^1_A(\mathbb{Z})):$
\begin{equation}
\mathcal{H}:=\Big\{n\in  C( [0,T],\widetilde{\ell}^1_A(\mathbb{Z})),~ \Vert n(t)\Vert_{\widetilde{\ell}^1_A(\mathbb{Z})}\leq C(t) \Vert n(0)\Vert_{\widetilde{\ell}^1_A(\mathbb{Z})},~\forall t\in [0,T]\Big\},
\end{equation}
where  $$\Vert n\Vert_{L^{\infty}([0,T],\widetilde{\ell}^1_A(\mathbb{Z}))}:=\sup_{t\in[0,T]}\sum_{i\in \mathbb{Z}}e^{-C_A\vert ih_K\vert} \vert n_i(t)\vert,$$ and $C(t):=1+e^{(\overline{b}+\overline{d}+2\overline{p})t}.$ Note that $C( [0,T],\widetilde{\ell}^1_A(\mathbb{Z}))$ endowed with the norm $\Vert .\Vert_{L^{\infty}([0,T],\widetilde{\ell}^1_A(\mathbb{Z}))},$ is a Banach space. Let us now define the mapping 
 \begin{align*}
 \Phi:& \mathcal{H}\mapsto \mathcal{H}\\
 &n \mapsto \Phi(n),
 \end{align*}
where $\Phi(n)$ is the unique solution of \begin{align*}
    \begin{cases}
       \frac{d}{dt}\Phi(n)(t)=(D^K+B^K)n(t)\\
       \Phi(n)(0)=n^K(0).
    \end{cases}
\end{align*} 
Note that from \eqref{eq:initial linear growth2} there exists a positive constant $C(K)$ such that $\Vert n^K(0)\Vert_{\widetilde{\ell}^1_A(\mathbb{Z})}\leq C(K).$ Then, the proof follows by simple computations proving that, for a small time $T>0,$ $\Phi(\mathcal{H})\subset \mathcal{H},$ and it is a contraction in $\mathcal{H}$ with the norm $\Vert .\Vert_{L^{\infty}([0,T],\widetilde{\ell}^1_A(\mathbb{Z}))}.$ We conclude the existence and uniqueness until this small time $T,$ and by classical iteration arguments, we conclude the existence and uniqueness of the global solution. 
\subsection*{Proof of Proposition 4.3}
The proof of this result follows similar arguments to the proof of Theorem 3.3. Note that from  \eqref{eq:initial linear growth2}, and by adapting the proof of the comparison principle in Proposition 3.4 in \cite{J}, we obtain for some positive constant $C,$ that $u^K_i(t)\leq A\vert i\delta_K\vert +B+Ct.$ Moreover, from \eqref{eq:large dynamics}, we have $u^K_i(t)\geq a.$ We deduce that, $u^K$ is locally uniformly bounded. The spatial Lipschitz bound is given by Proposition 4.2 in \cite{J}, and the Lipschitz bound in time can be deduced as in the proof of  Theorem 3.3. Then, by the Arzelà–Ascoli
 theorem, we conclude the compactness of $(\widetilde{u}^K)_K$, and classical arguments  yield the convergence along subsequences to the Hamilton-Jacobi equation. The uniqueness of such  Hamilton-Jacobi equations with an initial condition of linear growth is well-known (see for instance \cite{G}). Hence, the local uniform convergence of the full sequence $(\widetilde{u}^K)_K$ to the unique viscosity solution of \eqref{eq:p-HJ}.
\subsection*{Proof of the maximum principle of \eqref{eq:maximum point form}}
The only change from the proof given in Appendix A is the behavior of $Y^K$ at infinity in space. In this case, it is easy to see from \eqref{eq:moments control} that 
$\max\Big(n^K_i(t),\mathbb{E}\big((N^K_i(t))^2\big)\Big)\leq C(T,K)e^{C_A\vert ih_K\vert}.$ Moreover, since $ n^K_i(t)\geq K^a,$ we  deduce the exponential growth of $Y^K$ for fixed $K,$ i,e  $$Y^K_i(t)\leq C(T,K)e^{C\vert ih_K\vert},$$ 
where $C$ is a positive constant. Then, the remainder of the proof is exactly as in Appendix A.\\

\textbf{Acknowledgements:} The author thanks Nicolas Champagnat and Sylvie Méléard for their  comments and careful reading. This work is funded by the European Union (ERC, SINGER, 101054787). Views and opinions expressed are however those of the author only and do not necessarily reflect those of the European
Union or the European Research Council. Neither the European Union nor the granting authority
can be held responsible for them. This work has also been supported by the Chair "Modélisation Mathématique et Biodiversité" of Veolia Environnement-École Polytechnique-Museum National
d’Histoire Naturelle-Fondation X.
\newpage
\nocite{*}
\bibliographystyle{plain}
\bibliography{ref.bib}

\end{document}